\newtheorem{Thm}{Theorem}
\newtheorem{Def}{Definition}
\newtheorem{Con}{Conjecture}
\newtheorem{Lem}{Lemma}
\newtheorem{Cor}{Corollary}
\theoremstyle{remark}
\newtheorem{Example}{Example}
\newcommand{\NN}{\mathbb N}
\newcommand{\IN}{\mathbb Z}
\newcommand{\RN}{\mathbb R}
\newcommand{\CN}{\mathbb C}
\newcommand{\QN}{\mathbb Q}
\newcommand{\id}{\mathbb I}
\newcommand{\abs}[1]{\left|{#1}\right|}
\newcommand{\sgn}{\mathrm{sgn}}
\def\contFrac{%
    \operatornamewithlimits{%
        \mathchoice{
            \vcenter{\hbox{\Large $\mathcal{K}$}}%
        }{
            \vcenter{\hbox{\Large $\mathcal{K}$}}%
        }{
            \mathrm{\mathcal{K}}%
        }{
            \mathrm{\mathcal{K}}%
        }
    }
}
\newcommand{\cfplus}{\begin{array}{c}\\+\end{array}}
\newcommand{\cfdots}{\begin{array}{c}\\ \cdots\end{array}}
\newcommand{\gaussbkt}[1]{\left\lfloor {#1} \right\rfloor}
\newcommand{\lcm}[1]{\mathrm{lcm}\left({#1}\right)}
\newcommand{\genbinom}[2]{\left[\begin{array}{c}{#1} \\ {#2}\end{array}\right]}
\title[\resizebox{6in}{!}{Convergence Properties of the Classical and Generalized Rogers-Ramanujan Continued Fraction}]{Convergence Properties of the Classical and Generalized Rogers-Ramanujan Continued Fraction}
\author{Emil-Alexandru Ciolan}
\address{Rheinische Friedrich-Wilhelms-Universit\"at Bonn, Regina-Pacis-Weg 3, 53113 Bonn, Germany}
\email{ciolan@uni-bonn.de}
\author{Robert Axel Neiss}
\address{Mathematical Institute, University of Cologne, Weyertal 86-90, 50931 Cologne, Germany}
\email{rneiss@math.uni-koeln.de}
\subjclass[2010]{11A55, 11P84}
\begin{document}
\date{}

\begin{abstract}
The aim of this paper is to study the convergence and divergence of the Rogers-Ramanujan and the generalized Rogers-Ramanujan continued fractions on the unit circle. We provide an example of an uncountable set of measure zero on which the Rogers-Ramanujan continued fraction $R(x)$ diverges and which enlarges a set previously found by Bowman and Mc Laughlin. We further study the generalized Rogers-Ramanujan continued fractions $R_a(x)$ for roots of unity $a$ and give explicit convergence and divergence conditions. As such, we extend some work of Huang towards a question originally investigated by Ramanujan and some work of Schur on the convergence of $R(x)$ at roots of unity. In the end, we state several conjectures and possible directions for generalizing Schur's result to all Rogers-Ramanujan continued fractions $R_a(x)$.\end{abstract}
\keywords{Convergence, divergence, Rogers-Ramanujan continued fractions, roots of unity}
\maketitle

\section{Introduction}
An infinite continued fraction is an expression of the form
$$b_0+\cfrac{a_1}{b_1+\cfrac{a_2}{b_2+\cfrac{a_3}{b_3+\cdots}}},$$
where $a_i$ and $b_i$ can be real or complex numbers, or functions of one or several variables, as will be the case throughout this paper.
For space considerations the following notation is used: 
\[ b_0+\dfrac{a_1}{b_1} \cfplus \dfrac{a_2}{b_2} \cfplus \cfdots.\]
We further adopt the notations
$$\contFrac_{i=1}^n \frac {a_i}{b_i} :=\dfrac{a_1}{b_1} \cfplus \dfrac{a_2}{b_2} \cfplus \cfdots \cfplus \dfrac{a_n}{b_n}\quad\text{and}\quad\contFrac_{i=1}^{\infty} \frac {a_i}{b_i} :=\dfrac{a_1}{b_1} \cfplus \dfrac{a_2}{b_2} \cfplus \cfdots . $$ 
As usual, we let $P_n$ and $Q_n$ be the unreduced numerator and denominator of the $n$-th \textit{convergent} (or \textit{approximant}) of the continued fraction, that is,
\begin{equation}\label{PnQn}
\frac{P_n}{Q_n}=b_0+\contFrac_{i=1}^{n}\frac{a_i}{b_i}. \end{equation}
\indent It is well-known (see, e.g., \cite[p. 9]{Lorentz}) that, for $n\ge 2,$ $P_n$ and $Q_n$ satisfy the following recursions
\begin{equation}
\begin{gathered}
P_n=b_nP_{n-1}+a_nP_{n-2},\\
Q_n=b_nQ_{n-1}+a_nQ_{n-2}
\end{gathered}
\end{equation}
and also that, for $n\ge 1,$
\begin{equation} \label{eqn:det-form}
P_nQ_{n-1}-P_{n-1}Q_n=(-1)^{n-1}\prod\limits_{i=1}^{n}a_i.
\end{equation}

\subsection{The Rogers-Ramanujan continued fraction and its generalization. }One of the most famous examples of continued fractions is the \textit{Rogers-Ramanujan continued fraction}, which is defined for $|x|<1$ by 
\begin{equation}\label{RogRaj}
R(x):=\frac{x^{1/5}}{1}\cfplus\frac{x}{1}\cfplus\frac{x^2}{1}\cfplus\frac{x^3}{1}\cfplus\cfdots.
\end{equation}    
The Rogers-Ramanujan continued fraction is known for its connections with the celebrated \textit{Rogers-Ramanujan identities}

\[G(q):=\sum_{n=0}^{\infty}\frac{q^{n^2}}{(q;q)_n}=\frac{
1}{(q;q^5)_{\infty}(q^4;q^5)_{\infty}},\]
\[H(q):=\sum_{n=0}^{\infty}\frac{q^{n^2+n}}{(q;q)_n}=\frac{1}{(q^2;q^5)_{\infty}(q^3;q^5)_{\infty}},\]
where 
\[(a;q)_0:=1,\quad (a;q)_n:=\prod_{j=0}^{n-1}(1-aq^j)\quad\text {and}\quad (a;q)_{\infty}:=\prod_{j=0}^{\infty}(1-aq^j),\quad |q|<1.\]
More precisely, if we let $K(x):=x^{1/5}/ R(x),$ we have 
\[K(q)=\frac{G(q)}{H(q)},\]
whence
\[R(q)=q^{1/5}\frac{(q;q^5)_{\infty}(q^4;q^5)_{\infty}}{(q^2;q^5)_{\infty}(q^3;q^5)_{\infty}}.\] 
The Rogers-Ramanujan continued fraction admits the following generalizations:
\[R_a(x):=\frac{1}{1}\cfplus\frac{ax}{1}\cfplus\frac{ax^2}{1}\cfplus\frac{ax^3}{1}\cfplus\cfdots\quad\text{(as defined in \cite[p. 12]{Berndt})},\] and
\[R(a):=\frac{a}{1}\cfplus\frac{ax}{1}\cfplus\frac{ax^2}{1}\cfplus\cfdots\quad\text{(as defined in \cite[p. 49]{Huang})}.\] 
For our purposes we shall use the former, namely 
\begin{equation}\label{genRR}
R_a(x)=\frac 11\cfplus \contFrac_{n=1}^{\infty}\frac{ax^n}{1}.\end{equation}
We introduce 
\[K_a(x):=\frac 11\cfplus\contFrac_{n=0}^{\infty}\frac{ax^n}{1}=\frac 11\cfplus aR_a(x). \]
\indent For a survey on the Rogers-Ramanujan continued fraction and Rogers-Ramanujan identities, we refer the reader to \cite[Section 1]{Berndt} and \cite[pp. 3325--3327]{Bowman}.
We are interested in studying the convergence of the Rogers-Ramanujan continued fraction as well as its generalization. Let us recall that the continued fraction $\contFrac_{n=1}^{\infty}\frac{a_n}{b_n}$ is said to converge if $\frac{P_n}{Q_n}$ converges as $n\rightarrow\infty.$ It is clear that the convergence behavior of $R(x)$ (and $R_a(x)$) is the same with that of $K(x)$ (and $K_a(x)$). It is immediate by the following classical theorem that $R(x)$ (hence also $K(x)$) converges to a value in $\mathbb {\widehat{C}}$ for all $|x|<1$.  
\begin{Thm}[Worpitzky, {\cite[p. 35]{Lorentz}}]
Let the continued fraction $\contFrac_{n=1}^{\infty}a_n/1$ be such that $|a_n|\le 1/4$ for $n\ge 1$. Then $\contFrac_{n=1}^{\infty}a_n/1 $ converges. All approximants of the continued fraction lie in the disk $|w|<1/2,$ and the value of the continued fraction is in the disk $w\le 1/2$.
\end{Thm}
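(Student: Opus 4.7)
My plan is to represent each partial step by the M\"obius transformation $s_n(w):=a_n/(1+w)$ and to study their compositions $S_n:=s_1\circ\cdots\circ s_n$, for which $S_n(0)=P_n/Q_n$. First I would show that the closed disk $D:=\{w\in\CN:|w|\le 1/2\}$ is invariant under every $s_n$: for $w\in D$ the reverse triangle inequality gives $|1+w|\ge 1/2$, hence $|s_n(w)|\le|a_n|/|1+w|\le(1/4)/(1/2)=1/2$. Iterating from $0\in D$ shows $P_n/Q_n\in D$, and since $s_n(0)=a_n$ has $|a_n|\le 1/4<1/2$ every approximant lies strictly inside the open disk $|w|<1/2$. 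The value of the continued fraction then lies in $\overline{D}$ by closedness, which settles the second and third assertions.

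For convergence I would combine the determinant identity \eqref{eqn:det-form} with a quantitative lower bound on $|Q_n|$. The identity yields
\[
\left|\frac{P_n}{Q_n}-\frac{P_{n-1}}{Q_{n-1}}\right|=\frac{\prod_{i=1}^n|a_i|}{|Q_nQ_{n-1}|}\le\frac{4^{-n}}{|Q_nQ_{n-1}|},
\]
so it suffices to prove $|Q_n|\ge(n+1)/2^n$: together with $|Q_{n-1}|\ge n/2^{n-1}$ this forces $|Q_nQ_{n-1}|\ge 2n(n+1)/4^n$, the displayed differences are majorised by $1/(2n(n+1))$, and the approximants form a Cauchy sequence in the compact set $\overline{D}$.

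To establish $|Q_n|\ge(n+1)/2^n$, I would introduce the auxiliary quantity $\alpha_n:=2|Q_n|-|Q_{n-1}|$. Applying the triangle inequality to the $Q$-recursion gives $|Q_n|\ge|Q_{n-1}|-(1/4)|Q_{n-2}|$, and after substituting $|Q_{n-1}|=(|Q_{n-2}|+\alpha_{n-1})/2$ this collapses to the clean halving recurrence $\alpha_n\ge\alpha_{n-1}/2$. Since $\alpha_1=2|Q_1|-|Q_0|=1$, induction yields $\alpha_n\ge 2^{1-n}$, equivalently $|Q_n|\ge|Q_{n-1}|/2+2^{-n}$; unrolling this down to $|Q_0|=1$ delivers the desired bound.

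The hard part is precisely this sharp lower bound on $|Q_n|$. The borderline value $|a_n|=1/4$ makes $s_n$ only non-strictly contractive on $\partial D$, and the naive two-term induction $|Q_n|\ge|Q_{n-1}|-|Q_{n-2}|/4$ does not close because it asks for an \emph{upper} bound on $|Q_{n-2}|$ inside a lower-bound induction. Tracking $\alpha_n$ is what converts this mixed inequality into a one-term recurrence, and the resulting linear-in-$n$ improvement over the naive geometric bound $|Q_n|\ge 2^{-n}$ is exactly what is needed to push the telescoping sum into the summable regime.
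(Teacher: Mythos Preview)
The paper does not supply a proof of Worpitzky's theorem; it is quoted as a classical result with a reference to \cite[p.~35]{Lorentz}. So there is nothing to compare your argument against line by line.

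That said, your proof is correct and self-contained. The invariance of the closed disk $D=\{|w|\le 1/2\}$ under each $s_n$ is standard, and your remark that the innermost value $s_n(0)=a_n$ already lies in the \emph{open} disk, combined with the fact that $|w|<1/2\Rightarrow|1+w|>1/2\Rightarrow|s_k(w)|<1/2$, does give the strict containment of every approximant. For convergence, your key step is the lower bound $|Q_n|\ge (n+1)2^{-n}$, and the device of tracking $\alpha_n=2|Q_n|-|Q_{n-1}|$ is a clean way to turn the awkward mixed inequality $|Q_n|\ge|Q_{n-1}|-\tfrac14|Q_{n-2}|$ into a one-step recurrence $\alpha_n\ge\alpha_{n-1}/2$. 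Unrolling from $\alpha_1=1$ and $|Q_0|=1$ gives exactly $(n+1)2^{-n}$, and then the determinant identity \eqref{eqn:det-form} yields the summable bound $1/(2n(n+1))$ on consecutive differences.

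Two cosmetic remarks. First, once you know the approximants form a Cauchy sequence in $\CN$, completeness of $\CN$ already gives convergence; compactness of $\overline{D}$ is not needed, only its closedness to locate the limit. Second, your argument implicitly uses the normalisation $Q_0=Q_1=1$ appropriate to $\contFrac_{n\ge1}a_n/1$ with no leading $b_0$; it would be worth stating this, since the paper's displayed recursions are written for the version with a $b_0$ term.
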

It is natural, however, to consider the expressions \eqref{RogRaj} and \eqref{genRR} also for $|x|\ge 1$ and ask questions about convergence. Andrews et al. \cite{Andrews} showed the following for $|x|>1$, thereby establishing a claim of Ramanujan:
\[\lim\limits_{j\rightarrow\infty}K_{2j+1}(x)=\frac{1}{K(-1/x)}\]
and
\[\lim\limits_{j\rightarrow\infty}K_{2j}(x)=\frac{K(1/x^4)}{x},\]
where 
\[K_n(x):=1+\frac{x}{1}\cfplus\frac{x^2}{1}\cfplus\frac{x^3}{1}\cfplus\cfdots\cfplus\frac{x^n}{1}.\]
\indent Thus, the interesting case to study convergence properties is when $\abs x = 1$, which is a much more subtle question.
Some work has been carried out in this regard. In an important 1917 paper of Schur \cite{Schur}, he showed that if $x$ is a primitive $m$-th root of unity and $m\equiv 0\pmod 5,$ then $K(x)$ diverges, and if $m\not\equiv 0\pmod 5,$ then $K(x)$ converges with 
\[K(x)=\lambda x^{(1-\lambda\sigma m)/5}K(\lambda),\]
where $\lambda=\left( \dfrac m5\right) $ is the Legendre symbol and $\sigma$ the least positive residue of $m$ modulo 5. It has been an open question since Schur's paper whether the Rogers-Ramanujan continued fraction converges or diverges at points on the unit circle which are not roots of unity. \\
\indent This convergence question was utilized by Lubinsky to provide an important counterexample to a conjecture of Baker-Gammel-Wills in \cite{Lubinsky}, and Bowman and Mc Laughlin performed a comprehensive study of convergence at roots of unity in \cite{Bowman}. In particular, they showed the following. 

\begin{Thm}[adapted from {\cite[Theorem 2]{Bowman}}]
\label{Bowman}
Let \[S=\left\lbrace t\in(0,1)\setminus\QN:e_{i+1}(t)\ge \phi^{d_i(t)}\text{ infinitely often}\right\rbrace, \]
where $\phi=\frac {1+\sqrt 5}2$ denotes the golden ratio,  $\left[0;e_1(t),e_2(t),\ldots\right] $ the continued fraction expansion of $t$ and $c_i(t)/d_i(t)$ its $i$-th convergent.   
Then $S$ is an uncountable set of measure zero, and if $t\in S$ and $y=\exp(2\pi i t)$ then $K(y)$ diverges.
\end{Thm}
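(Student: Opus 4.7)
The argument naturally splits into three parts: that $S$ has measure zero, that $S$ is uncountable, and that $K(y)$ diverges for $y=\exp(2\pi i t)$ with $t\in S$.

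\textbf{Measure zero and uncountability.} For the metric assertion I would appeal to standard Gauss-Kuzmin/Khinchin theory: the Gauss measure of $\{t:e_{n+1}(t)\geq N\}$ is $O(1/N)$ uniformly in $n$, while the denominators $d_n(t)\geq F_n$ grow at least like Fibonacci numbers, so the probabilities $\Pr[e_{i+1}(t)\geq \phi^{d_i(t)}]$ are summable and Borel-Cantelli forces $|S|=0$. For uncountability I would construct $t\in S$ by prescribing $e_{i_k+1}=\lceil \phi^{d_{i_k}}\rceil$ along a very sparse subsequence $(i_k)$ and leaving $e_j\in\{1,2\}$ free at all other indices, producing $2^{\aleph_0}$ distinct admissible continued fraction expansions.

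\textbf{Divergence, setup.} If $c_i/d_i$ is a convergent of $t$ with $e_{i+1}(t)\geq \phi^{d_i}$, the classical estimate
$$\left|t-\frac{c_i}{d_i}\right|\leq \frac{1}{d_i d_{i+1}}\leq \frac{1}{e_{i+1}d_i^{2}}\leq \phi^{-d_i}d_i^{-2}$$
shows $y=e^{2\pi i t}$ is exponentially (in $d_i$) close to the primitive $d_i$-th root of unity $\zeta_i:=e^{2\pi i c_i/d_i}$. The strategy is to transport information about the approximants $P_n(\zeta_i)/Q_n(\zeta_i)$ at the root of unity back to $y$, for indices $n$ up to a suitable multiple of $d_i$, by means of a Taylor expansion
$$Q_n(y)-Q_n(\zeta_i)=(y-\zeta_i)Q_n'(\zeta_i)+O\!\left(|y-\zeta_i|^{2}\,\|Q_n''\|_{\infty}\right),$$
with $P_n$ controlled analogously, using the explicit representation of $P_n,Q_n$ via Gaussian binomial coefficients to bound the derivatives far below the trivial degree bound $n(n+1)/2$.

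\textbf{Divergence, oscillation.} The final ingredient is a separation statement at the root of unity: whether or not $5\mid d_i$, Schur's analysis supplies two indices $n_i,n_i'$ of size $\Theta(d_i)$ such that $P_{n_i}(\zeta_i)/Q_{n_i}(\zeta_i)$ and $P_{n_i'}(\zeta_i)/Q_{n_i'}(\zeta_i)$ differ by a quantity bounded away from zero — in the divergent case ($5\mid d_i$) this is part of Schur's theorem, while in the convergent case it follows from the fact that the pre-periodic convergents oscillate before Schur's closed form kicks in. Combining this with the perturbation estimate above produces, along the $i$'s witnessing $t\in S$, pairs of convergents of $K(y)$ that stay uniformly apart, so $K(y)$, and hence $R(y)$, diverges.

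\textbf{Main obstacle.} The delicate point is the transport step: $Q_n$ has degree quadratic in $n$, so a naive sup-norm derivative bound wastes far more than the Diophantine gain $\phi^{-d_i}$. What makes the theorem work is that the Rogers-Ramanujan approximants admit a sparse representation as $q$-series, so $|Q_n'(\zeta_i)|$ and $\|Q_n''\|_{\infty}$ can be estimated much more tightly than the degree suggests. Calibrating these bounds against the explicit threshold $\phi^{d_i(t)}$ — which is exactly the scale at which Schur's oscillation at $\zeta_i$ survives the perturbation — is where the main technical work lies, and it explains precisely why this particular exponent $\phi$ appears in the definition of $S$.
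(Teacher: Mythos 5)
Your treatment of the metric statements (measure zero via Borel--Cantelli against the Gauss measure, uncountability by freely choosing partial quotients away from a sparse forcing subsequence) is fine in outline; the paper simply cites \cite[Lemma 3]{Bowman} for the measure-zero part.

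The divergence argument, however, has a genuine gap, located exactly where you place the ``main obstacle.'' You propose to transport the \emph{ratios} $P_n(\zeta_i)/Q_n(\zeta_i)$ from the root of unity $\zeta_i=e^{2\pi i c_i(t)/d_i(t)}$ to $y$ and then exhibit two separated approximants. This fails on two counts. First, when $5\nmid d_i$ the continued fraction \emph{converges} at $\zeta_i$, and there is no uniform-in-$i$ lower bound on the oscillation of its approximants at indices of size $\Theta(d_i)$; ``the pre-periodic convergents oscillate before Schur's closed form kicks in'' is not a quantitative statement and cannot be made one without lower bounds you do not supply. Second, even granting separation at $\zeta_i$, perturbing the ratio $P_n/Q_n$ requires a \emph{lower} bound on $\abs{Q_n(y)}$, which no Taylor or Lipschitz estimate on $P_n$ and $Q_n$ individually can give. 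The actual proof (carried out in Section 2 for the stronger Theorem \ref{thm:rrf-divergence}) never touches the ratios. The missing key idea is Lemma \ref{lem:div-prop}: by the determinant identity \eqref{eqn:det-form}, if $K(y)$ converges then $\abs{Q_NQ_{N-1}}\to\infty$, so it suffices to show that $\abs{Q_{d_i-1}(y)\,Q_{d_i-2}(y)}$ stays \emph{bounded} along the subsequence of $i$ with $e_{i+1}\ge\phi^{d_i}$. This follows by combining Schur's explicit evaluation $\max\{\abs{Q_{d_i-1}(\zeta_i)},\abs{Q_{d_i-2}(\zeta_i)}\}\le 2$ at primitive $d_i$-th roots of unity with a Lipschitz bound of the shape $\abs{Q_n(x)-Q_n(y)}\le A(n+\mu)^2\phi^{n}\abs{x-y}$ (the analogue of Lemma \ref{lem:denom-lipschitz}) and with $\abs{y-\zeta_i}\le 2\pi/(d_i^2 e_{i+1})$: the factor $d_i^2$ from the Diophantine estimate absorbs the polynomial $(n+\mu)^2$, and $e_{i+1}\ge\phi^{d_i}$ absorbs the exponential. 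This is the true origin of the exponent $\phi$ --- it is the Fibonacci growth rate $\abs{Q_n(x)}\le C\phi^n$ forced by the recursion $Q_n=Q_{n-1}+x^nQ_{n-2}$ on $\abs x=1$ --- and your concern about the quadratic degree of $Q_n$ is a red herring: the Lipschitz constant comes from differentiating the three-term recursion and inducting, not from any sparseness of the $q$-series coefficients.
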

 
In this paper we improve the above result by proving  

\begin{Thm}
\label{thm:rrf-divergence}
Let $R>0,$ $\lambda_R=\frac {1+R}2 + \sqrt{1+\left(\frac {1+R}2\right)^2}$ and 
\begin{equation*} 
S_R = \left\{e^{2\pi i t} : t\in T_R\right\} \cap M_R, \end{equation*}
where
\begin{equation*} M_R = \left\{x\in\CN: \abs x = 1, \abs{x+1} < R\right\} \text{~and~} T_R = \left\{t\in (0,1)\setminus \QN: \liminf_{n\rightarrow\infty} \frac {\lambda_R^{\frac {d_n(t)} 2}}{e_{n+1}(t)} < \infty \right\}. \end{equation*}
Then $S_R$ is an uncountable set of measure zero, and if $x\in S_R$ then $K(x)$ diverges.  
\end{Thm}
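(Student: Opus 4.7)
The plan is to combine an improved growth estimate for the denominators $Q_n(x)$ on the arc $M_R$ with the root-of-unity approximation strategy from \cite{Bowman}. The key observation is that at $x=-1$ the recurrence $Q_n = Q_{n-1} + x^n Q_{n-2}$ splits by parity into two Fibonacci sequences, giving $|Q_n(-1)| \asymp \phi^{n/2}$---sharper by a factor $\phi^{n/2}$ than the naive bound $|Q_n|\le |Q_{n-1}|+|Q_{n-2}|$. Since $\lambda_0 = \phi$, the condition defining $T_R$ is precisely the one matching this refined growth, and extending it to all of $M_R$ is what yields the improvement over Theorem \ref{Bowman}. The first step would be to prove the growth lemma: for every $x \in M_R$ and every $n \ge 0$,
\[|P_n(x)|,\;|Q_n(x)| \le C_R\, \lambda_R^{n/2}.\]
Iterating the recurrence two steps at a time,
\[\begin{pmatrix} Q_{n+1} \\ Q_n \end{pmatrix} = \begin{pmatrix} 1 + x^{n+1} & x^n \\ 1 & x^n \end{pmatrix}\begin{pmatrix} Q_{n-1} \\ Q_{n-2} \end{pmatrix},\]
I would bound the product of the resulting transfer matrices in an operator norm weighted along the dominant Fibonacci eigendirection at $x=-1$. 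The exponent $\lambda_R$ emerges as the positive root of $\mu^2 - (1+R)\mu - 1 = 0$, i.e., as the worst-case two-step spectral radius over $M_R$ and as the fixed point of $\mu \mapsto (1+R) + 1/\mu$.

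With the growth lemma in hand, the divergence argument follows the structure of \cite{Bowman} with the exponent $\phi$ systematically replaced by $\lambda_R^{1/2}$. Given $t \in T_R$ and $x = e^{2\pi i t} \in M_R$, pick a subsequence $(n_k)$ along which $e_{n_k+1}(t) \ge c\,\lambda_R^{d_{n_k}/2}$, and set $x_k = e^{2\pi i c_{n_k}/d_{n_k}}$. The classical estimate $|t - c_{n_k}/d_{n_k}| < 1/(d_{n_k} d_{n_k+1})$ together with the $T_R$-condition yields $|x - x_k| \ll 1/(\lambda_R^{d_{n_k}/2}\, d_{n_k}^2)$. Iterating the inhomogeneous recurrence for $\Delta_n := Q_n(x) - Q_n(x_k)$ and applying the growth lemma to the forcing term $|(x^n - x_k^n)\, Q_{n-2}(x_k)| \le n|x-x_k| C_R \lambda_R^{(n-2)/2}$, one obtains the sharp Lipschitz bound $|\Delta_m| = O(m^2 |x-x_k| \lambda_R^{m/2})$, whence $|\Delta_{d_{n_k}}| = O(1)$ along the subsequence. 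Combined with the root-of-unity analysis at $x_k$ from \cite{Bowman}---which exhibits oscillation of the convergents of $K(x_k)$ along specific sub-periods dictated by the periodic monodromy of $(Q_m(x_k),Q_{m-1}(x_k))$---this bounded perturbation is transferred to $x$, producing at least two distinct accumulation points of $P_n/Q_n$ at $x$ and, via the determinant identity \eqref{eqn:det-form}, ruling out convergence of $K(x)$.

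Uncountability of $S_R$ follows from a standard recursive construction of uncountably many irrationals $t \in T_R$ whose images $e^{2\pi i t}$ lie in the open arc $M_R$: inside any subinterval one is free to prescribe arbitrarily large $e_{n+1}(t)$ at infinitely many scales. The measure-zero statement follows from a Borel--Cantelli argument, since the Lebesgue measure of $\{t \in (0,1) : e_{n+1}(t) \ge \lambda_R^{d_n(t)/2}\}$ is summable in $n$ (using $d_n \gtrsim \phi^n$). The main obstacle is the growth lemma itself: one must extract the sharp exponent $\lambda_R^{1/2}$ from the arc condition $|x+1|<R$---rather than the trivial Fibonacci rate $\phi$---and keep $C_R$ bounded as $R\to 0^+$, so that the improvement over Theorem \ref{Bowman} is genuine. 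A secondary subtlety is the Lipschitz bound for $\Delta_n$: its sharpness relies on iterating the homogeneous recurrence and the growth estimate in tandem, so that $d_{n_k}^2\,\lambda_R^{d_{n_k}/2}\,|x-x_k|$ telescopes, through the $T_R$-condition, into a bounded quantity.
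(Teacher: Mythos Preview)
Your overall architecture matches the paper's: a growth bound $|Q_n(x)|\le C(R)\lambda_R^{n/2}$ on $M_R$, a Lipschitz estimate $|Q_n(x)-Q_n(y)|\le A(R)\,n^2\lambda_R^{n/2}|x-y|$, the Diophantine bound $|t-c_n/d_n|\le 1/(d_n^2e_{n+1})$, and a contradiction via the determinant identity. Two points, however, deserve correction.

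\emph{The growth lemma.} Your two-step transfer matrix has top-left entry $1+x^{n+1}$, which does \emph{not} admit a uniform bound by $1+R$ on $M_R$ (for $x$ near $-1$ and $n$ odd one has $|1+x^{n+1}|$ close to $2$). The paper instead uses Rogers' trick on the scalar recurrence to obtain
\[
Q_n=(1+x^{n-1}+x^n)\,Q_{n-2}-x^{2n-3}Q_{n-4},
\]
and the crucial factorisation $1+x^{n-1}+x^n=1+x^{n-1}(1+x)$ gives $|1+x^{n-1}+x^n|\le 1+R$ directly. This is what makes $\lambda_R$ the positive root of $\mu^2-(1+R)\mu-1=0$; your matrix-norm outline does not produce this exponent without additional work separating parities, at which point it reduces to the paper's argument.

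\emph{The divergence step.} Your description of the root-of-unity input as ``oscillation of the convergents of $K(x_k)$'' is not right, and relying on it would leave a gap: for $5\nmid d_{n_k}$ the fraction $K(x_k)$ \emph{converges}, so there is no oscillation to transfer. What the paper actually imports (from Schur's explicit evaluations) is the uniform size bound
\[
\max\bigl\{|Q_{d_n(t)-1}(x_n)|,\ |Q_{d_n(t)-2}(x_n)|\bigr\}\le 2,
\]
valid at every primitive $d_n$-th root of unity regardless of convergence. Combined with your Lipschitz estimate and the $T_R$-condition, this gives $|Q_{d_n-1}(x)\,Q_{d_n-2}(x)|\le (B+2)^2$ along a subsequence. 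One then concludes via Lemma~\ref{lem:div-prop}: convergence of $K(x)$ forces $|Q_NQ_{N-1}|\to\infty$, so the boundedness along a subsequence yields divergence. There is no need to manufacture two accumulation points.
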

Theorem \ref{thm:rrf-divergence} improves Theorem 2, in the sense that the set $S_R$ extends the set $S$ locally around $-1$ for any $R\in(0,\sqrt 5-1)$. The smaller one chooses $R,$ the larger the set is around $-1$ on the unit circle. Note that $\lambda_R\to \phi$ as $R\to 0$.
\begin{Example}\setcounter{Example}{0}
We now give an example of $y\in S_R\setminus S.$ For this, choose
\begin{equation*} \begin{array}{lllll}
e_1 = 1, & e_2 = 1, & e_3 = 2, & e_{n+1} = \gaussbkt{\lambda_R^{d_n/2}} & \text{for }n\geq 3; \\
d_1 = 1, & d_2 = 2, & d_3 = 5, & d_{n+1} = e_{n+1}d_n+ d_{n-1} & \text{for }n\geq 3.
\end{array}\end{equation*}
Numerically, this is
\begin{equation*} t=[0;e_1,e_2,e_3,\dots] = [0;1,1,2,9,611180631,\dots]. \end{equation*}
This computation will be detailed in Section 2.\\
\end{Example} 
\noindent\textbf{Remark.} Note that the condition $t\in\mathbb R\setminus\mathbb Q$ is essential in both Theorems 2 and 3, because we need an infinite continued fraction expansion for $t$.\\

Despite the effort which has been expended on studying the convergence of the Rogers-Ramanujan continued fraction, very little is currently known about other continued fractions. As a generalization of what is known for the Rogers-Ramanujan continued fraction, a natural first step is to determine at which roots of unity does the generalized Rogers-Ramanujan continued fraction converge. We provide an extension of Schur's result, which in particular explains the 5-divisibility condition in his theorem as part of a larger framework. \\
\indent The limit we compute in the following theorem coincides with what Ramanujan originally claimed in \cite[p. 57]{Ramanujan}. While stating the limit, he leaves open the question of where it actually exists. Huang partially answers this question in \cite{Huang} by using mostly analytic arguments. Here we come up with a purely algebraic approach which leads to different conditions; however, whenever Huang's and our conditions intersect, the results are consistent. 

\begin{Thm} \label{thm:gen-rrf-convergence}
Let $\zeta_m$ be a primitive $m$-th root of unity and $a\in\CN$. Assume $\sqrt{\frac 1 4 + a^m}\notin \QN(a,\zeta_m)$. Then 
$K_a$ converges at $x=\zeta_m$ to some limit in $\QN\left(a,\sqrt{\frac 1 4 + a^m},\zeta_m\right)\subset\CN 
$ if and only if $\frac 1 4 + a^m\notin \RN_{\leq 0}.$
In this case, the limit is given by
\begin{equation*} K_a(\zeta_m) = \frac {P_{m-2}(a,\zeta_m)} {\frac 1 2 + \sqrt{\frac 1 4 +a^m} - a\zeta_m^{m-1}P_{m-3}(a,\zeta_m)},\end{equation*}
where we set $P_{-1}(a,x):=1$ and $P_{-2}(a,x):=0$ for convenience.
\end{Thm}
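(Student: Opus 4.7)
The plan is to exploit the $m$-periodicity of the partial numerators $a_k = a\zeta_m^{k-2}$ ($k\geq 2$) defining $K_a$ at $x=\zeta_m$. Writing each step as a M\"obius transformation $g_k(\omega)=a_k/(1+\omega)$ with matrix $N_k = \bigl(\begin{smallmatrix}0 & a_k \\ 1 & 1\end{smallmatrix}\bigr)$, the $n$-th convergent equals $P_n/Q_n=(g_1\circ g_2\circ\cdots\circ g_n)(0)$. Set $\phi := g_2\circ\cdots\circ g_{m+1}$, with matrix $B = N_2 N_3\cdots N_{m+1}$. The identity $N_{k+m}=N_k$ for $k\geq 2$ yields
\begin{equation*}
(g_2\circ\cdots\circ g_n)(0) \;=\; \phi^k\bigl(\psi_s(0)\bigr),\qquad n = km+1+s,\; 0\leq s<m,
\end{equation*}
where $\psi_s := g_2\circ\cdots\circ g_{s+1}$ (with $\psi_0:=\mathrm{id}$) is independent of $k$. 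Hence convergence of $K_a(\zeta_m)$ reduces to convergence of the $m$ subsequential orbits $\{\phi^k(\psi_s(0))\}_{k\geq 0}$ to a common limit $T^{*}$; when this holds, $K_a(\zeta_m) = g_1(T^{*}) = 1/(1+T^{*})$.

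The next step is to compute the spectrum of $B$. Identifying the columns of $B$ with the numerators $\tilde{P}_n$ and denominators $\tilde{Q}_n$ of the tail continued fraction $\frac{a}{1}+\frac{ax}{1}+\frac{ax^2}{1}+\cdots$, one has $B = \bigl(\begin{smallmatrix}\tilde{P}_{m-1} & \tilde{P}_m \\ \tilde{Q}_{m-1} & \tilde{Q}_m\end{smallmatrix}\bigr)$, and the paper's indexing gives $P_n(a,x)=\tilde{Q}_{n+1}(a,x)$. A direct computation of determinants yields $\det B=(-1)^m a^m\zeta_m^{m(m-1)/2}=-a^m$ (the two parities of $m$ are handled separately). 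The crucial identity is $\tr(B) = \tilde{P}_{m-1}(a,\zeta_m)+\tilde{Q}_m(a,\zeta_m) = 1$: induction on $m$ using the tail recursion $\tilde{Q}_n=\tilde{Q}_{n-1}+ax^{n-1}\tilde{Q}_{n-2}$ (and its analogue for $\tilde P$) shows that the polynomial $\tilde{P}_{m-1}(a,x)+\tilde{Q}_m(a,x)-1\in\IN[a][x]$ is divisible by the $m$-th cyclotomic polynomial $\Phi_m(x)$, which suffices. The eigenvalues of $B$ are therefore
\begin{equation*}
\lambda_\pm = \tfrac{1}{2}\pm\sqrt{\tfrac{1}{4}+a^m}.
\end{equation*}

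Finally, we analyze the dynamics of $\phi$. Since $\lambda_++\lambda_- = 1\in\RN$, a brief computation gives $|\lambda_+|^2-|\lambda_-|^2=2\operatorname{Re}\sqrt{\tfrac{1}{4}+a^m}$, which vanishes exactly when $\sqrt{\tfrac{1}{4}+a^m}$ is purely imaginary, equivalently when $\tfrac{1}{4}+a^m\in\RN_{\leq 0}$. The hypothesis $\sqrt{\tfrac{1}{4}+a^m}\notin\QN(a,\zeta_m)$ excludes both the parabolic case $\tfrac{1}{4}+a^m=0$ and any coincidence of $\psi_s(0)\in\QN(a,\zeta_m)$ with the repelling fixed point $(\lambda_--\tilde{Q}_m)/\tilde{Q}_{m-1}$, which lies in $\QN(a,\zeta_m,\sqrt{\tfrac{1}{4}+a^m})\setminus\QN(a,\zeta_m)$. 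Thus, when $\tfrac{1}{4}+a^m\notin\RN_{\leq 0}$ the map $\phi$ is loxodromic and every orbit $\phi^k(\psi_s(0))$ converges to the attractive fixed point $T^{*}=(\lambda_+-\tilde{Q}_m)/\tilde{Q}_{m-1}$; when $\tfrac{1}{4}+a^m\in\RN_{\leq 0}$, $\phi$ is elliptic and each orbit traces a closed or dense set, so no $\psi_s$-subsequence converges and $K_a(\zeta_m)$ diverges. Substituting into $K_a(\zeta_m)=1/(1+T^{*})$ and simplifying via $\tilde{Q}_m-\tilde{Q}_{m-1}=a\zeta_m^{m-1}\tilde{Q}_{m-2}$ produces the closed form displayed in the theorem. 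The main technical obstacle is the trace identity $\tr(B)=1$ at roots of unity; the remainder is standard M\"obius dynamics and routine algebraic simplification.
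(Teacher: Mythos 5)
Your strategy is, at its core, the same as the paper's: your one--period transfer matrix $B$ is conjugate to the matrix $A_m$ of Lemma \ref{lem:matr-rec} (both have trace $a\zeta_m^{m-1}P_{m-3}+Q_{m-2}$ and determinant $-a^m$), the eigenvalues $\frac 12\pm\sqrt{\frac 14+a^m}$ drive everything, the algebraic hypothesis $\sqrt{\frac 14+a^m}\notin\QN(a,\zeta_m)$ is used for exactly the same purpose (in the paper to force $a_{r\pm}\neq 0$ in the eigenvector expansion, in your version to keep $\psi_s(0)\in\QN(a,\zeta_m)$ off the repelling fixed point, which lives in the quadratic extension), and the convergence dichotomy is the comparison of $\abs{\lambda_+}$ with $\abs{\lambda_-}$. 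The M\"obius packaging does buy one small thing: it treats $m=1,2$ uniformly, whereas the paper must handle those cases separately because \eqref{eqn:matr-rec} requires $m\geq 3$. Your endgame (fixed point $(\lambda_+-\tilde Q_m)/\tilde Q_{m-1}$, then $K_a=1/(1+T^*)$ and the tail recursion) does reproduce the stated limit; you should also note that $\tilde Q_{m-1}\neq 0$, which follows from the same algebraic hypothesis as in the paper's argument that no entry of $B-\lambda_\pm\id$ can vanish.

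The genuine gap is the trace identity $\tr B=1$, which you rightly single out as the main technical obstacle but do not actually prove. ``Induction on $m$'' cannot work as stated: the matrices $B=B(m)$ for different $m$ are built from different strings of partial numerators and, crucially, are evaluated at different roots of unity $\zeta_m$, so no recursion links $\tr B(m)$ to $\tr B(m-1)$; likewise the recursion $\tilde Q_n=\tilde Q_{n-1}+ax^{n-1}\tilde Q_{n-2}$ runs over the index $n$ for fixed $x$ and gives no handle on divisibility by $\Phi_m(x)$, which is a property of the single polynomial $\tilde P_{m-1}+\tilde Q_m-1$ that changes with $m$. Note also that this polynomial is \emph{not} divisible by $x^m-1$: for $m=3$ and $x=1$ the trace equals $1+3a$, so any correct argument must genuinely see the primitivity of the root. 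The paper closes this gap with the explicit $x$-binomial formulas of Lemma \ref{lem:qp-eqns}, from which Lemma \ref{thm:gen-rrf-tr} exhibits the coefficient of $a^k$ in the trace as $T_{m,k}(x)=\frac{x^m-1}{x^{m-k}-1}\genbinom{m-k}{k}_x$, visibly vanishing at primitive $m$-th roots of unity for $1\leq k\leq\gaussbkt{\frac m2}$; alternatively one can import Huang's combinatorial identity $p_{m-1}+q_m=1$. You need one of these inputs; with it, the rest of your argument goes through.
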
 
\begin{Thm}\setcounter{Thm}{2} \label{thm:gen-rrf-divergence}
Let $\zeta_m$ be a primitive $m$-th root of unity and $a\in\CN$. Assume $\frac 1 4 + a^m\in \RN_{<0}$. Then the generalized Rogers-Ramanujan continued fraction is divergent at $x=\zeta_m$. 
\end{Thm}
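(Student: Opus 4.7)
The plan is to exploit periodicity of the partial numerators $a\zeta_m^n$ to recast convergence of $K_a(\zeta_m)$ as iterated linear dynamics on $\CN^2$, and then to use the hypothesis to force the two governing eigenvalues to be distinct complex conjugates of equal modulus.

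Since $\zeta_m^m = 1$, the partial numerators of $K_a$ at $x = \zeta_m$ are periodic in $n$ with period $m$. Writing the convergent recursion in matrix form,
\[\begin{pmatrix}P_n \\ P_{n-1}\end{pmatrix} = M_n\begin{pmatrix}P_{n-1}\\ P_{n-2}\end{pmatrix}, \qquad M_n := \begin{pmatrix}1 & a\zeta_m^{n-2}\\ 1 & 0\end{pmatrix} \quad (n \geq 2),\]
one has $M_{n+m}=M_n$, so the period matrix $\mathcal{M}:=M_{m+1}M_m\cdots M_2$ is fixed and its iterates $\mathcal{M}^k$ advance the convergents in blocks of $m$. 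A direct computation using $\det M_n = -a\zeta_m^{n-2}$ gives $\det\mathcal{M}=-a^m$, and the algebraic identity $\tr(\mathcal{M})=1$---which should be extracted from the same matrix manipulations used to prove Theorem \ref{thm:gen-rrf-convergence}---yields the characteristic polynomial $\lambda^2-\lambda-a^m$, with roots $\lambda_\pm = \tfrac12 \pm \sqrt{\tfrac14+a^m}$.

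Under the hypothesis $\tfrac14+a^m\in\RN_{<0}$, the radical is purely imaginary and nonzero, so $\lambda_\pm$ are distinct complex conjugates of common modulus $\sqrt{-a^m}>0$, and the ratio $\mu := \lambda_-/\lambda_+$ lies on the unit circle with $\mu\neq 1$. Let $\{v_+,v_-\}$ be an eigenbasis of $\mathcal{M}$, and expand the two linearly independent initial vectors as $(1,0)^T = \alpha v_+ + \beta v_-$ and $(1,1)^T = \gamma v_+ + \delta v_-$, noting that $\alpha\delta\neq\beta\gamma$ since both bases are linearly independent. Iteration then gives
\[\frac{P_{mk+1}}{Q_{mk+1}} = \frac{\alpha(v_+)_1 + \beta\mu^k(v_-)_1}{\gamma(v_+)_1 + \delta\mu^k(v_-)_1} \quad\text{and}\quad \frac{P_{mk}}{Q_{mk}} = \frac{\alpha(v_+)_2 + \beta\mu^k(v_-)_2}{\gamma(v_+)_2 + \delta\mu^k(v_-)_2}.\]
Each right-hand side is a M\"obius function of $\mu^k$, and the non-vanishing of $\alpha\delta-\beta\gamma$ forces at least one of the two to be non-constant. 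Since $|\mu|=1$ and $\mu\neq 1$, the orbit $\{\mu^k\}$ is either dense in $S^1$ or a nontrivial finite cycle, so the non-constant ratio fails to converge, proving divergence of $K_a(\zeta_m)$. In the (highly non-generic) degenerate sub-case that $\mathcal{M}$ is diagonal and both M\"obius functions are constant, a direct computation shows the two displayed subsequences converge to distinct limits, which again yields divergence.

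I expect the main obstacle to be the clean invocation of $\tr(\mathcal{M})=1$: this algebraic coincidence is exactly what produces the discriminant $\tfrac14+a^m$ and underlies the dichotomy between Theorems \ref{thm:gen-rrf-convergence} and \ref{thm:gen-rrf-divergence}. Once it is in hand, the remaining eigenvalue/linear-algebra argument above is essentially the standard reduction of a periodic continued fraction to an elliptic M\"obius iteration on $\widehat{\CN}$.
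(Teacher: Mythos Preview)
Your argument is correct and follows essentially the same route as the paper: reduce via periodicity to a period matrix with characteristic polynomial $\lambda^2-\lambda-a^m$ (this is exactly Lemma~\ref{lem:matr-rec} and Lemma~\ref{thm:gen-rrf-tr}, so your invocation of $\tr\mathcal{M}=1$ is legitimate), observe that the hypothesis forces $|\lambda_+|=|\lambda_-|$ with $\lambda_+\neq\lambda_-$, and then argue that the eigen-decomposition obstructs convergence. The only organizational difference is that the paper works with the conjugate matrix $A_m$ acting on $(P_n,Q_n)^T$ and secures a residue $r$ with both eigen-coefficients nonzero via a three-consecutive-indices argument, then derives an explicit non-Cauchy lower bound, whereas you use the transfer-matrix setup on $(R_n,R_{n-1})^T$ and a M\"obius-orbit argument on $\mu^k$.

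One imprecision worth fixing: the sentence ``the non-vanishing of $\alpha\delta-\beta\gamma$ forces at least one of the two to be non-constant'' is not quite right as stated. The determinant of your $j$-th M\"obius map is $(v_+)_j(v_-)_j(\alpha\delta-\beta\gamma)$, so both maps are constant precisely when each eigenvector has a vanishing coordinate, i.e.\ when $\mathcal{M}$ is diagonal in the standard basis. You do handle that degenerate case separately (correctly, since $P_1Q_0-P_0Q_1\neq 0$ by \eqref{eqn:det-form} and $a\neq 0$), so the overall proof stands; just rephrase the intermediate claim accordingly.
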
 
Note now that Schur's result regarding the convergence of $K(x)$ in the case $m\not\equiv 0\pmod 5$ follows as a consequence of Theorem \ref{thm:gen-rrf-convergence} upon setting $a=1.$
\begin{Cor}\setcounter{Cor}{0} 
Let $5\nmid m$ and $\zeta_m\in\CN$ be a primitive $m$-th root of unity. Then the Rogers-Ramanujan continued fraction is convergent at $x=\zeta_m$.
\end{Cor}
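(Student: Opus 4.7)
The plan is to specialize Theorem~\ref{thm:gen-rrf-convergence} to $a=1$. From the definitions of $K(x)$ and $K_a(x)$ one checks directly that $K_1(x) = K(x)/(K(x)+1)$ and, more precisely, that the $(n+2)$-nd convergent of $K_1$ is the M\"obius image under $z\mapsto z/(z+1)$ of the $n$-th convergent of $K$. Since this map is a homeomorphism of $\widehat{\CN}$, convergence of the convergents of $K_1$ at $\zeta_m$ is equivalent to convergence of the convergents of $K$ at $\zeta_m$. It therefore suffices to verify the two hypotheses of Theorem~\ref{thm:gen-rrf-convergence} for $a=1$ and $x=\zeta_m$.

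The convergence condition $\tfrac{1}{4}+a^m\notin\RN_{\leq 0}$ specializes to $\tfrac{5}{4}\notin\RN_{\leq 0}$, which is immediate. The standing assumption $\sqrt{\tfrac{1}{4}+a^m}=\sqrt{5}/2\notin\QN(\zeta_m)$ is the precise point at which the hypothesis $5\nmid m$ enters. I would derive it from standard cyclotomic theory: $\QN(\sqrt{5})$ is the unique quadratic subfield of $\QN(\zeta_5)$ (via the Gauss sum identity $\sum_{k=1}^{4}\left(\tfrac{k}{5}\right)\zeta_5^k=\sqrt{5}$), and by Kronecker--Weber its conductor is exactly $5$; hence $\QN(\sqrt{5})\subset\QN(\zeta_m)$ if and only if $5\mid m$, and the assumption $5\nmid m$ gives the required irrationality.

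With both hypotheses in place, Theorem~\ref{thm:gen-rrf-convergence} yields convergence of $K_1$ at $\zeta_m$, and then the M\"obius observation from the first paragraph gives convergence of $K$ at $\zeta_m$, which is convergence of the Rogers--Ramanujan continued fraction $R$ at that point. The only step with any subtlety is the convergent-level relationship between $K_1$ and $K$, which I view as the main (mild) obstacle; the two hypothesis checks themselves are routine.
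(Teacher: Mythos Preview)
Your proposal is correct and follows essentially the same route as the paper: verify the two hypotheses of Theorem~\ref{thm:gen-rrf-convergence} at $a=1$ (the analytic one is trivial, and $\sqrt{5}\notin\QN(\zeta_m)\Leftrightarrow 5\nmid m$), then pass from $K_1$ back to $K$ (equivalently $R$) via the M\"obius relation $K_1=K/(K+1)$, which the paper records as $K_1(x)=\tfrac{1}{1+x^{-1/5}R(x)}$. You supply more detail than the paper---the Gauss-sum/conductor argument and the convergent-level M\"obius correspondence---where the paper simply cites the cyclotomic fact as well known and relies on its earlier remark that $K$, $R$, $R_a$, $K_a$ share convergence behavior; the precise index shift you state is inessential and depends only on conventions.
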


Briefly, the paper is organized as follows. In Section 2 we prove Theorem \ref{thm:rrf-divergence}. The proof requires several preliminary steps which are similar in spirit with the ideas in \cite[pp. 3331--3335]{Bowman}. In Section 3 we give the proofs of Theorems \ref{thm:gen-rrf-convergence} and \ref{thm:gen-rrf-divergence}. We conclude by giving a complete conjectural description of the convergence and divergence of $K_a(x)$ at roots of unity $a$. 

\section{Divergence of the Rogers-Ramanujan continued fraction}

In this section we study the divergence of the Rogers-Ramanujan continued fraction for $\abs x = 1$ and prove Theorem \ref{thm:rrf-divergence}. The following lemma will be a key step. 

\begin{Lem}
\label{lem:div-prop}
Let $\{a_n\}_{n\ge 1},\{b_n\}_{n\ge 1}\subset\CN,$ with $\abs{a_n}=1$. If the continued fraction
\begin{equation} \nonumber \frac 1 1 \cfplus \contFrac_{n=1}^\infty \frac{a_n}{b_n} \end{equation}
with unreduced numerator $P_N$ and denominator $Q_N$ converges, then
\begin{equation*} \label{eqn:denom-div} \lim_{N\rightarrow\infty} \abs{Q_N Q_{N-1}} = \infty. \end{equation*}
\begin{proof} This is essentially proved in \cite[p. 3331]{Bowman} but here we give a self-contained proof for the reader's convenience. 
By \eqref{eqn:det-form} we have
\begin{equation} \nonumber \abs{P_N Q_{N-1} - P_{N-1} Q_N} = \abs{(-1)^N \prod_{i=1}^N a_i} = 1. \end{equation}
Assuming that $P_N/Q_N\rightarrow L \in\CN,$ we find 
\begin{equation} \nonumber \abs{\frac 1 {Q_N Q_{N-1}}} = \abs{\frac {P_N Q_{N-1} - P_{N-1} Q_N} {Q_N Q_{N-1}}} = \abs{\frac {P_N}{Q_N} - \frac {P_{N-1}} {Q_{N-1}}} \leq \abs{\frac {P_N}{Q_N} - L} + \abs{\frac {P_{N-1}} {Q_{N-1}} - L} \stackrel{N\rightarrow\infty}\longrightarrow 0, \end{equation} 
yielding the desired claim.
\end{proof}
\end{Lem}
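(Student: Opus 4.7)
The plan is to exploit the determinant identity \eqref{eqn:det-form} together with the hypothesis $\abs{a_n}=1$. Since $\abs{\prod_{i=1}^N a_i} = 1$, that identity immediately yields $\abs{P_N Q_{N-1} - P_{N-1} Q_N} = 1$ for every $N\geq 1$. This is the only point in the argument where the normalization on the $a_n$ enters; the $b_n$ play no explicit role.

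Next I would divide both sides of this identity by $\abs{Q_N Q_{N-1}}$. The left-hand side becomes $1/\abs{Q_N Q_{N-1}}$, and the right-hand side rewrites as $\abs{P_N/Q_N - P_{N-1}/Q_{N-1}}$, the modulus of the difference of two consecutive convergents. By hypothesis both $P_N/Q_N$ and $P_{N-1}/Q_{N-1}$ tend to the same finite limit $L\in\CN$, so the triangle inequality bounds this difference by $\abs{P_N/Q_N - L} + \abs{P_{N-1}/Q_{N-1} - L}$, which goes to $0$. Hence $1/\abs{Q_N Q_{N-1}} \to 0$, which is exactly $\abs{Q_N Q_{N-1}} \to \infty$.

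There is no serious obstacle here: the entire argument is a one-line rearrangement of the Euler--Wallis determinant formula, and the hypothesis $\abs{a_n}=1$ is used solely to keep the numerator of the determinant identity bounded away from $0$. The only conceptual point worth flagging is that ``converges'' is being read as convergence to a point of $\CN$ rather than $\widehat{\CN}$, since an infinite limit would make the estimate $\abs{P_N/Q_N - L}\to 0$ meaningless. This finite-limit interpretation is implicit in the intended use of the lemma, where it will be applied contrapositively: if one can exhibit a subsequence along which $\abs{Q_N Q_{N-1}}$ stays bounded, then $K(x)$ (or the more general $\frac{1}{1}\cfplus\contFrac a_n/b_n$) cannot converge to a finite value.
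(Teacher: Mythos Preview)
Your proof is correct and is essentially identical to the paper's own argument: both invoke the determinant formula \eqref{eqn:det-form} together with $\abs{a_n}=1$ to get $\abs{P_NQ_{N-1}-P_{N-1}Q_N}=1$, then divide by $\abs{Q_NQ_{N-1}}$ and use the triangle inequality with the assumed finite limit $L$. Your added remark about reading convergence in $\CN$ rather than $\widehat{\CN}$, and the intended contrapositive application, is accurate and matches how the lemma is used later in the paper.
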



Next, we recall the algorithm used to compute the continued fraction expansion of an irrational number in $(0,1).$ 
Let $t\in (0,1)\setminus\mathbb Q$. We define the recursive sequences
\begin{equation*} \label{eqn:real-contfrac} 
\begin{gathered}[l]
t_0(t) := t,\quad t_n(t) := \frac 1 {t_{n-1}(t)} - \gaussbkt{\frac 1  {t_{n-1}(t)}}\quad\text{for }n\ge 1,\\
e_n(t) := \gaussbkt{\frac 1 {t_{n-1}(t)}}\quad\text{for }n\ge 1. 
\end{gathered}
\end{equation*}
This implies
\begin{equation} \nonumber t = \contFrac_{n=1}^\infty \frac 1 {e_n(t)}. \end{equation}
If $c_n(t)$ and $d_n(t)$ denote the $n$-th numerator and denominator, then by \eqref{eqn:det-form} it is immediate that $\gcd(c_n(t),d_n(t))=1$.


\begin{Lem}
\label{lem:cf-exp-ineq}
With the previous notations, we have
\begin{equation*} \label{eqn:rcf-denom-ineq}  \abs{t-\frac{c_n(t)}{d_n(t)}} \leq \frac 1 {d_n(t)^2~e_{n+1}(t)}\quad\forall n\in\mathbb N. \end{equation*}
\begin{proof}
By the definitions and the recursion relation, we have
\begin{equation*} t = \contFrac_{k=1}^n \frac 1{e_k(t)} \cfplus \frac{t_n(t)}1 = \frac{c_{n}(t)+t_n(t)~c_{n-1}(t)} {d_n(t)+t_n(t)~d_{n-1}(t)}, \end{equation*} so that
\begin{align*} \abs{t-\frac {c_n(t)}{d_n(t)}} &= \abs{\frac{c_{n}(t)+t_n(t)~c_{n-1}(t)} {d_n(t)+t_n(t)~d_{n-1}(t)} - \frac {c_n(t)}{d_n(t)}} = \abs{\frac {t_n(t)~(c_{n-1}(t)~d_n(t)-c_n(t)~d_{n-1}(t))}{d_n(t)~(d_n(t)+t_n(t)~d_{n-1}(t))}} \\
		&\leq \frac {t_n(t)} {d_n(t)^2} \leq \frac 1 {d_n(t)^2~e_{n+1}(t)}. \qedhere \end{align*}
\end{proof}
\end{Lem}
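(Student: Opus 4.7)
The plan is to write $t$ as a finite continued fraction truncated at level $n$, but with the tail remainder $t_n(t)$ reinstated as a modified last entry. Concretely, the recursive definition $t_n(t) = 1/t_{n-1}(t) - e_n(t)$ can be unfolded to give
\begin{equation*} t = \contFrac_{k=1}^n \frac 1 {e_k(t)} \cfplus \frac{t_n(t)} 1, \end{equation*}
i.e.\ the continued fraction whose $k$-th partial quotient is $e_k(t)$ for $k<n$ and $e_n(t)+t_n(t)$ at step $n$. This is just iterated application of the relation $1/t_{k-1}(t) = e_k(t) + t_k(t)$.

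Next, I would use the standard $P_n/Q_n$-recursion (displayed in the introduction) to evaluate this finite continued fraction in closed form. Since the numerators and denominators $c_k(t),\,d_k(t)$ for $k\leq n-1$ agree with those obtained by truncating at $e_k(t)$, and only the last denominator is replaced by $e_n(t)+t_n(t)$, the standard formula yields
\begin{equation*} t = \frac{c_n(t) + t_n(t)\,c_{n-1}(t)}{d_n(t) + t_n(t)\,d_{n-1}(t)}. \end{equation*}

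From here the proof is essentially algebraic: subtract $c_n(t)/d_n(t)$, clear denominators, and the numerator simplifies to $t_n(t)\bigl(c_{n-1}(t)d_n(t) - c_n(t)d_{n-1}(t)\bigr)$, whose absolute value is $t_n(t)$ by the determinant identity \eqref{eqn:det-form} applied to the continued fraction expansion of $t$ (where all $a_i=1$). The denominator is $d_n(t)\bigl(d_n(t) + t_n(t)d_{n-1}(t)\bigr) \geq d_n(t)^2$ because $t_n(t),d_{n-1}(t)\geq 0$. This yields the intermediate bound $t_n(t)/d_n(t)^2$.

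The final step is to convert $t_n(t)$ into $1/e_{n+1}(t)$. By the very definition $e_{n+1}(t) = \lfloor 1/t_n(t)\rfloor$, we have $1/t_n(t) \geq e_{n+1}(t)$ and therefore $t_n(t) \leq 1/e_{n+1}(t)$, giving the claimed inequality. There is no genuine obstacle in this lemma; the only substantive point is writing down the correct "tail representation" of $t$, after which every remaining step is either the determinant identity or a trivial positivity bound.
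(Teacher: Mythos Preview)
Your proof is correct and follows essentially the same approach as the paper: both express $t$ via the tail representation $t=(c_n+t_n c_{n-1})/(d_n+t_n d_{n-1})$, subtract $c_n/d_n$, invoke the determinant identity to reduce the numerator to $t_n$, drop the positive term $t_n d_{n-1}$ in the denominator, and finish with $t_n\le 1/e_{n+1}$.
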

Now, for $K(x)$ we have the recursion relations
\begin{equation*} \label{eqn:rr1-recursion}
\begin{array}{lll}
P_{n}(x) = P_{n-1}(x) + x^n P_{n-2}(x), & P_0(x) = 1, & P_{-1}(x) = 0; \\
Q_{n}(x) = Q_{n-1}(x) + x^n Q_{n-2}(x), & Q_0 (x)= 1, & Q_{-1}(x) = 1. \\
\end{array}
\end{equation*}
We introduce the notations $P_n(x)$ and $Q_n(x)$ for the convergents $P_n$ and $Q_n$ to point out the dependence on the variable $x$. Also note that defining the initial terms $P_{-1},P_0$ and $Q_{-1},Q_0$ does not affect the recursion.
We adapt an observation from Rogers in \cite[Lemma I]{Rogers} and recombine any three consecutive equations to obtain new recursion formulae
\begin{equation*} \label{eqn:rr2-recursion} 
\begin{gathered}
P_{n}(x) =  (1+x^{n-1}+x^n) P_{n-2}(x) - x^{2n-3} P_{n-4}(x);  \\
Q_{n}(x)=  (1+x^{n-1}+x^n) Q_{n-2}(x) - x^{2n-3} Q_{n-4}(x),  \\
\end{gathered}\end{equation*} with initial terms 
\begin{eqnarray}
\begin{array}{llll}\nonumber
P_2(x) = 1+x^2,   & P_1(x) = 1,    & P_0(x) = 1, & P_{-1}(x) = 0;  \\
Q_2(x) = 1+x+x^2, & Q_1(x) = 1+x,  & Q_0(x) = 1 ,& Q_{-1}(x) = 1.  \\
\end{array} 
\end{eqnarray}
As we decoupled the even and odd indexed convergents, it is now convenient to introduce a new notation:
$$ P_{n,k} := P_{2n+k},\quad Q_{n,k} := Q_{2n+k}\quad\text{for } n\in\NN\text{ and } k\in\{0,1\}. $$


The following will provide a crucial bound for our computation.

\begin{Lem}
\label{lem:rec-growth}
Let $R>0$. Given the recursion
\begin{equation*} \alpha_n = (1+R)~\alpha_{n-1} + \alpha_{n-2} \text{ with } \alpha_1, \alpha_0 > 0, \end{equation*}
there exists a constant depending on $R$, say $C(R)>0,$ such that
\begin{equation*} 0 < \alpha_n < C(R)\lambda_R ^n\quad \forall n\in\NN.\end{equation*} 
\end{Lem}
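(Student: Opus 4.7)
The plan is to recognize the recursion as a standard linear second-order recurrence and exploit its characteristic polynomial. The associated polynomial is $x^2 - (1+R)x - 1$, whose roots are precisely
\begin{equation*}
\lambda_R = \tfrac{1+R}{2} + \sqrt{1+\bigl(\tfrac{1+R}{2}\bigr)^2} \quad\text{and}\quad \mu_R = \tfrac{1+R}{2} - \sqrt{1+\bigl(\tfrac{1+R}{2}\bigr)^2}.
\end{equation*}
Since $\lambda_R\mu_R = -1$ (constant term) and $\lambda_R > 1 + R > 1$, one has $\mu_R < 0$ and $\abs{\mu_R} = 1/\lambda_R < 1 < \lambda_R$. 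The key algebraic identity for the induction will be $\lambda_R^2 = (1+R)\lambda_R + 1$, which comes for free from the characteristic equation.

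The positivity claim $\alpha_n > 0$ is immediate by induction on $n$: both $\alpha_0$ and $\alpha_1$ are positive by hypothesis, and the recursion coefficients $1+R$ and $1$ are positive, so the property propagates.

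For the upper bound, I would set
\begin{equation*}
C(R) := \max\Bigl\{\alpha_0,\ \tfrac{\alpha_1}{\lambda_R}\Bigr\},
\end{equation*}
so that $\alpha_0 \leq C(R)$ and $\alpha_1 \leq C(R)\lambda_R$, i.e., the bound $\alpha_n \leq C(R)\lambda_R^n$ holds for $n=0,1$. Assuming it holds for $n-1$ and $n-2$ (with $n\geq 2$), one computes
\begin{equation*}
\alpha_n = (1+R)\alpha_{n-1} + \alpha_{n-2} \leq C(R)\bigl[(1+R)\lambda_R^{n-1} + \lambda_R^{n-2}\bigr] = C(R)\lambda_R^{n-2}\bigl[(1+R)\lambda_R + 1\bigr] = C(R)\lambda_R^n,
\end{equation*}
using the characteristic identity in the last step. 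This closes the induction.

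There is no real obstacle here, since this is a textbook exercise on Fibonacci-type recursions; the only nontrivial point to notice is that the expression for $\lambda_R$ in the theorem statement is exactly the dominant root of the characteristic polynomial, so the bound $\lambda_R^n$ is sharp. An alternative route would be to write $\alpha_n = A\lambda_R^n + B\mu_R^n$ explicitly via initial data, and estimate $|\alpha_n| \leq (|A|+|B|)\lambda_R^n$ using $\abs{\mu_R}<\lambda_R$, but the inductive argument above avoids solving for $A, B$ and keeps the constant $C(R)$ concrete.
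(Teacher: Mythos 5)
Your proof is correct. It takes a mildly different route from the paper: the paper's own proof is a one-liner that writes down the characteristic equation, records its two roots $\lambda_R$ and $\lambda_R'$, and simply states that the general term is $\alpha_n = a\lambda_R^n + b\lambda_R'^n$, leaving the bound $0<\alpha_n<C(R)\lambda_R^n$ implicit (it follows since $\abs{\lambda_R'}=1/\lambda_R<1$, exactly the observation you make, together with positivity from the recursion). This is precisely the ``alternative route'' you sketch at the end. Your primary argument --- induction using the identity $\lambda_R^2=(1+R)\lambda_R+1$ with the explicit constant $C(R)=\max\{\alpha_0,\alpha_1/\lambda_R\}$ --- is more self-contained and avoids solving for the coefficients $a,b$; it also makes transparent that the constant depends on the initial data $\alpha_0,\alpha_1$ as well as on $R$, which is how the lemma is actually invoked later (with $\alpha_0=1$, $\alpha_1=3$ fixed). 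Two trivial remarks: your induction yields $\alpha_n\leq C(R)\lambda_R^n$ rather than the strict inequality in the statement, which is repaired by doubling $C(R)$; and the claim that the bound is ``sharp'' should be read as sharpness of the growth rate $\lambda_R^n$, not of the constant.
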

\begin{proof}
The characteristic equation of the sequence, $
\alpha^2-(1+R)\alpha-1=0,
$
has solutions $\lambda_R=\frac {1+R}2 + \sqrt{1+\left(\frac {1+R}2\right)^2}$ and $\lambda_R'=\frac {1+R}2 - \sqrt{1+\left(\frac {1+R}2\right)^2},$ therefore the general term will be equal to 
\[\alpha_n=a\cdot\lambda_R^n+b\cdot\lambda_R'^{n},\] for some constants $a,b$. 
\end{proof}
Lemma \ref{lem:rec-growth} applies to our problem in the following sense. Let $M_R=\left\{x\in\CN: \abs x = 1, \abs{x+1} < R\right\}$ be as in Section 1. 

\begin{Lem}
\label{lem:denom-bound}
For any $R>0$ there is a constant $C(R)>0$ depending on $R,$ such that
\begin{equation*}  \abs{Q_n(x)} \leq C(R)\lambda_R ^{\frac n 2}\quad \forall x\in M_R\text{ and } \forall n\in\NN. \end{equation*}
\begin{proof}
Apply Lemma \ref{lem:rec-growth} to the two sequences $\{Q_{n,0}(x)\}_{n\ge 0}$ and $\{Q_{n,1}(x)\}_{n\ge 0}$ separately. We only consider the case $\{Q_{n,0}(x)\}_{n\ge0}$ as the other sequence is handled similarly. 
For the sequence $\{\alpha_n\}_{n\ge0}$ in Lemma \ref{lem:rec-growth} choose the initial values
\begin{equation} \nonumber \alpha_0 = 1 = \abs{Q_{0,0}(x)},\quad \alpha_1 = 3 \geq \abs{Q_{1,0}(x)}.  \end{equation}
Then, by induction, one easily sees that, for $n\geq 2,$
\begin{align*} \abs{Q_{2n}(x)}  &= \abs{Q_{n,0}(x)} = \abs{(1 + x^{2n-1}(1+x))Q_{n-1,0}(x) - x^{4n-3} Q_{n-2,0}(x)} \\
				&\leq (1+\abs{x}^{2n-1}\abs{1+x}) \abs{Q_{n-1,0}(x)} + \abs{x}^{4n-3} \abs{Q_{n-2,0}(x)} \\
				&\leq (1+R)~\alpha_{n-1} + \alpha_{n-2} = \alpha_n \leq C(R) \lambda_R ^n. \qedhere
\end{align*}
\end{proof}
\end{Lem}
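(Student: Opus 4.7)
The plan is to combine the Rogers-style three-term recursion for $Q_n(x)$ with the exponential bound of Lemma \ref{lem:rec-growth}. The Rogers trick already splits $\{Q_n(x)\}$ into its even-indexed and odd-indexed subsequences $\{Q_{n,0}(x)\}_{n\ge 0}$ and $\{Q_{n,1}(x)\}_{n\ge 0}$, each satisfying a two-term recursion in the new index, so it suffices to treat each parity class separately and compare to the scalar majorant of Lemma \ref{lem:rec-growth}.

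First I would estimate the coefficients of the recursion on the unit arc $M_R$. Writing $1 + x^{n-1} + x^n = 1 + x^{n-1}(1+x)$ and using $|x|=1$ together with $|x+1| < R$, one obtains
$$|1 + x^{n-1} + x^n| \le 1 + R \quad\text{and}\quad |x^{2n-3}| = 1$$
uniformly on $M_R$. The triangle inequality applied to the decoupled recursion then gives, for each $k\in\{0,1\}$ and $n\ge 2$,
$$|Q_{n,k}(x)| \le (1+R)\,|Q_{n-1,k}(x)| + |Q_{n-2,k}(x)|.$$

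Second, I would run a majorant comparison. Choose positive constants $\alpha_0, \alpha_1$ large enough to dominate the initial values $|Q_{0,k}(x)|$ and $|Q_{1,k}(x)|$ for both $k\in\{0,1\}$ and uniformly in $x\in M_R$; since these are explicit low-degree polynomials in $x$, they are trivially bounded on the unit circle. A straightforward induction in $n$ then yields $|Q_{n,k}(x)| \le \alpha_n$, and Lemma \ref{lem:rec-growth} supplies $\alpha_n \le C(R)\,\lambda_R^n$. Translating back to the original index $N = 2n + k$ and noting $\lambda_R \ge 1$, one concludes
$$|Q_N(x)| \le C(R)\,\lambda_R^{(N-k)/2} \le C'(R)\,\lambda_R^{N/2},$$
the constant being enlarged to absorb the factor $\lambda_R^{-1/2}$ arising when $k=1$.

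Main obstacle: there is no substantive difficulty here; the argument is essentially a bookkeeping exercise tying together the Rogers recursion to Lemma \ref{lem:rec-growth}. The only care point is to choose the initial majorants $\alpha_0, \alpha_1$ large enough to dominate both parity classes simultaneously on the compact arc $M_R$, and to collect constants so that the clean form $\lambda_R^{N/2}$ (rather than $\lambda_R^{\lfloor N/2\rfloor}$) holds uniformly in $N$.
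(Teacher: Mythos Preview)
Your proposal is correct and follows essentially the same approach as the paper: split into the even- and odd-indexed subsequences via the Rogers recursion, bound the recursion coefficients by $1+R$ and $1$ on $M_R$, and majorize by the scalar recursion of Lemma~\ref{lem:rec-growth}. The paper simply writes out the even case with explicit initial values $\alpha_0=1$, $\alpha_1=3$; your remark about enlarging the constant to absorb $\lambda_R^{-1/2}$ is in fact unnecessary since $\lambda_R>1$ already gives $\lambda_R^{(N-k)/2}\le \lambda_R^{N/2}$.
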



\begin{Lem}
\label{lem:denom-lipschitz}
For any $R>0$ there exist $A(R)>0$ and $\mu\geq 0$, such that
\begin{equation*} \label{eqn:denom-lipschitz}   \abs{Q_n(x)-Q_n(y)} \leq A(R)(n+\mu)^2\lambda_R ^{\frac n 2}\abs{x-y}\quad \forall x, y\in M_R\text{~and~}\forall n\in\NN. \end{equation*}
\begin{proof}
Let $x,y\in M_R$. To satisfy the claim for $n\in\{-1,0,1,2\}$, we only need to find the conditions
\begin{equation} \nonumber \mu \geq 0,~ A(R)\lambda_R  \geq 1+R \text{~~~and~~~} A(R)\lambda_R ^{\frac 1 2} \geq 1. \end{equation}
Thus, to verify the base case, we only need to bound $A(R)$ and $\mu$ from below. It is again necessary to prove the assertion for even and odd $n$ separately. Without loss of generality we deal only with the even case. Letting $n\geq 2$, we compute
\begin{align*} \abs{Q_{n,0}(x)-Q_{n,0}(y)} &\leq \abs{(1+x^{2n-1}(1+x))Q_{n-1,0}(x)-(1+y^{2n-1}(1+y))Q_{n-1,0}(y)} \\
					&+ \abs{x^{4n-3}Q_{n-2,0}(x)-y^{4n-3}Q_{n-2,0}(y)} \\
					&\leq \abs{Q_{n-1,0}(x)-Q_{n-1,0}(y)} + (2n-1)\abs{x-y}\abs{1+x}\abs{Q_{n-1,0}(x)} \\
					&+ \abs{y}^{2n-1} \abs{x-y} \abs{Q_{n-1,0}(x)} + \abs{y}^{2n-1} \abs{1+y} \abs{Q_{n-1,0}(x)-Q_{n-1,0}(y)} \\
					&+ (4n-3)\abs{x-y} \abs{Q_{n-2,0}(x)} + \abs{y}^{4n-3} \abs{Q_{n-2,0}(x)-Q_{n-2,0}(y)} \\
					&\leq (1+R) \abs{Q_{n-1,0}(x)-Q_{n-1,0}(y)} + (1+(2n-1)R)C(R)\lambda_R ^{n-1}\abs{x-y} \\
					&+ \abs{Q_{n-2,0}(x)-Q_{n-2,0}(y)} + (4n-3)C(R)\lambda_R ^{n-2}\abs{x-y},
\end{align*}
where the last inequality follows from Lemma \ref{lem:denom-bound}.\\
\indent In order to prove the assertion, we assume the induction hypothesis to hold for $n-1$ and $n-2$ and seek the sufficient conditions for $\mu$ and $A(R)$, for all $n$. We need the following inequality to hold:
\begin{align*} (1+R)~A(R)~(n+\mu-1)^2 \lambda_R ^{n-1} &+ (1+(2n-1)R)~C(R)~\lambda_R ^{n-1} + A(R)~(n+\mu-2)^2~\lambda_R ^{n-2} \\
&+ (4n-3)~C(R)~\lambda_R ^{n-2} \leq A(R)~(n+\mu)^2~\lambda_R ^n.
\end{align*}
Luckily, the terms of order $(n+\mu)^2$ cancel on both sides and by equivalent elementary manipulations it reduces to
\begin{align*} C(R)~\left[\left(4+2R\lambda_R \right)~n + \left((1-R)\lambda_R -3\right)\right]  
\leq A(R)~\left[\left(2(1+R)\lambda_R +4\right)(n+\mu) - \left(\lambda_R (1+R)+4\right)\right]. \end{align*}
But since we still have the freedom to choose $A(R)$ and $\mu$ very large, this condition can be satisfied for all $n$. Since the same calculation is possible for the odd sequence, the statement holds. 
\end{proof}
\end{Lem}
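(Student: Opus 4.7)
The plan is to prove the bound by strong induction on $n$, treating the even-indexed subsequence $\{Q_{n,0}\}$ and the odd-indexed subsequence $\{Q_{n,1}\}$ separately, since the recombined Rogers-type recursion used already in Lemma \ref{lem:denom-bound} decouples them. The low-index values $Q_{-1},Q_0,Q_1,Q_2$ are explicit polynomials of small degree in $x$, so their Lipschitz constants on $M_R$ are elementary to compute and yield concrete lower bounds on $A(R)$ and $\mu$ that serve as base cases for both subsequences.

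For the inductive step, focusing on the even case (the odd one is identical in form), I would expand the difference via the three-term recursion
\[Q_{n,0}(z) = \bigl(1 + z^{2n-1}(1+z)\bigr) Q_{n-1,0}(z) - z^{4n-3} Q_{n-2,0}(z)\]
and apply the telescoping identity $ab - a'b' = (a-a')b + a'(b-b')$ to each of the two products. This produces two families of contributions: first, differences of the form $Q_{m,0}(x)-Q_{m,0}(y)$ with $m\in\{n-1,n-2\}$, which the induction hypothesis controls; second, differences of coefficients, namely $\abs{x^{k}-y^{k}} \le k\abs{x-y}$ for $\abs x = \abs y = 1$, combined with $\abs{1+z}<R$ for $z\in M_R$, multiplied by $\abs{Q_{m,0}(x)}$, which Lemma \ref{lem:denom-bound} bounds by $C(R)\lambda_R^{m}$.

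Collecting all terms, the resulting upper bound takes the shape
\[A(R)\bigl[(1+R)(n+\mu-1)^{2}\lambda_R^{\,n-1}+(n+\mu-2)^{2}\lambda_R^{\,n-2}\bigr]\abs{x-y}+C(R)\,[\text{linear in }n]\,\lambda_R^{\,n-1}\abs{x-y}.\]
The decisive observation is that $\lambda_R$ satisfies $\lambda^{2}=(1+R)\lambda+1$, so the $A(R)$-part collapses against the target $A(R)(n+\mu)^{2}\lambda_R^{\,n}\abs{x-y}$: the leading quadratic terms in $n+\mu$ cancel exactly, and what is left is a residual requirement linear in $n$ with $C(R)/A(R)$-dependent coefficients. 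This residual can be satisfied uniformly in $n$ by first enlarging $\mu$ to create slack in the linear-in-$n$ coefficient, then enlarging $A(R)$ to dominate the remaining constant.

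The main obstacle, and the reason the polynomial prefactor $(n+\mu)^{2}$ is necessary rather than a bare constant, is the factor $(4n-3)$ that appears from $\abs{x^{4n-3}-y^{4n-3}}$ (and the analogous factor $(2n-1)R+1$ from $\abs{x^{2n-1}(1+x)-y^{2n-1}(1+y)}$): each inductive step injects $O(n)$ overhead beyond what the recursion itself would produce, and this overhead accumulates. The careful bookkeeping is in confirming that quadratic growth in $n$ is actually sufficient to absorb the accumulated $O(n)$-errors once $\mu$ is chosen large enough, rather than requiring a higher-degree polynomial weight.
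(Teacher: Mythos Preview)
Your proposal is correct and follows essentially the same route as the paper: both use the decoupled Rogers-type recursion for the even and odd subsequences, expand the difference via the product-telescoping identity, invoke Lemma~\ref{lem:denom-bound} and the bound $\abs{x^k-y^k}\le k\abs{x-y}$, and then exploit the characteristic equation $\lambda_R^2=(1+R)\lambda_R+1$ to cancel the quadratic-in-$(n+\mu)$ terms, leaving a linear residual that is absorbed by choosing $\mu$ and $A(R)$ large. Your explanation of why a quadratic prefactor (rather than a constant) is forced by the $O(n)$ injection at each step is a nice addition that the paper leaves implicit.
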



We finally prove the main divergence result. 

\begin{Thm}
Let $R>0,$ $\lambda_R=\frac {1+R}2 + \sqrt{1+\left(\frac {1+R}2\right)^2}$ and 
\begin{equation*} \label{eqn:div-set} S_R = \left\{e^{2\pi i t} : t\in T_R\right\} \cap M_R, \end{equation*}
where
\begin{equation*} M_R = \left\{x\in\CN: \abs x = 1, \abs{x+1} < R\right\} \text{~and~} T_R = \left\{t\in (0,1)\backslash \QN: \liminf_{n\rightarrow\infty} \frac {\lambda_R^{\frac {d_n(t)} 2}}{e_{n+1}(t)} < \infty \right\}. \end{equation*}
Then $S_R$ is an uncountable set of measure zero, and if $x\in S_R$ then $K(x)$ diverges.  \end{Thm}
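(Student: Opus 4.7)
The theorem has two parts: the measure/cardinality assertions about $S_R$, and the divergence of $K(x)$ on $S_R$.

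\textbf{Measure and cardinality.} To establish $|S_R|=0$ it suffices to show $|T_R|=0$. Under the Gauss measure on $(0,1)$ the partial quotients satisfy $P(e_{n+1}\ge k)\sim 1/k$, while $\lambda_R^{d_n/2}$ grows super-exponentially in $n$, so a Borel-Cantelli argument shows that $e_{n+1}(t)\ge c\,\lambda_R^{d_n(t)/2}$ fails eventually for a.e.\ $t$, giving $|T_R|=0$. For uncountability I would imitate the Example: fix $e_1,e_2,e_3$ so that $t$ lies in a subinterval of $(0,1)$ whose image under $e^{2\pi i \cdot}$ lies in $M_R$, then for $n\ge 3$ choose $e_{n+1}\in \{\lfloor\lambda_R^{d_n/2}\rfloor+j:j\ge 0\}$ freely at each step, yielding continuum-many $t$ with $e^{2\pi i t}\in S_R$.

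\textbf{Divergence setup.} Suppose for contradiction that $K(x)$ converges to some $L\in\widehat{\CN}$ for $x=e^{2\pi i t}\in S_R$. By Lemma \ref{lem:div-prop} we have $|Q_N(x)Q_{N-1}(x)|\to\infty$ and in particular $|Q_N(x)|\to\infty$. Extract a subsequence $\{n_k\}$ along which $\lambda_R^{d_{n_k}/2}/e_{n_k+1}\leq B$, and set $D_k:=d_{n_k}$, $y_k:=e^{2\pi i c_{n_k}/D_k}$, a primitive $D_k$-th root of unity lying in $M_R$ for $k$ large. By Lemma \ref{lem:cf-exp-ineq} together with $|e^{2\pi i s}-e^{2\pi i s'}|\leq 2\pi|s-s'|$,
\[ |x-y_k| \le \frac{2\pi}{D_k^2 e_{n_k+1}} \le \frac{2\pi B}{D_k^2 \lambda_R^{D_k/2}}. \]
Substituting into the Lipschitz bound of Lemma \ref{lem:denom-lipschitz} at $N=D_k$ and $N=D_k-1$ gives
\[ |Q_{D_k}(x)-Q_{D_k}(y_k)|,\;|Q_{D_k-1}(x)-Q_{D_k-1}(y_k)| \le 2\pi A(R)B\,\frac{(D_k+\mu)^2}{D_k^2} \le C_0, \]
uniformly in $k$; identical bounds hold for $P_{D_k},P_{D_k-1}$ via the same recursion.

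\textbf{Conclusion.} These bounds combined with $|Q_{D_k}(x)|\to\infty$ yield
\[ \left|\frac{P_{D_k}(x)}{Q_{D_k}(x)} - \frac{P_{D_k}(y_k)}{Q_{D_k}(y_k)}\right| \le \frac{C_0\bigl(1+|P_{D_k}(y_k)/Q_{D_k}(y_k)|\bigr)}{|Q_{D_k}(x)|} \longrightarrow 0, \]
so $P_{D_k}(y_k)/Q_{D_k}(y_k)\to L$. However, when $5\nmid D_k$ Schur's formula gives $K(y_k)=\lambda_k\,y_k^{(1-\lambda_k\sigma_k D_k)/5}\,K(\lambda_k)$ with $\lambda_k=(\tfrac{D_k}{5})$ and $\sigma_k\equiv D_k\pmod 5$; the phase $y_k^{(1-\lambda_k\sigma_k D_k)/5}=e^{2\pi i c_{n_k}/(5D_k)}\,e^{-2\pi i c_{n_k}\lambda_k\sigma_k/5}$ does not converge as $k\to\infty$ for $t$ irrational, so $K(y_k)$ has at least two distinct limit points; when $5\mid D_k$, $K(y_k)$ diverges outright by Schur. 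In either case this contradicts the convergence $P_{D_k}(y_k)/Q_{D_k}(y_k)\to L$.

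\textbf{Main obstacle.} The crux is quantitative: I need $P_{D_k}(y_k)/Q_{D_k}(y_k)$ to be close enough to $K(y_k)$ (or to reflect its divergent behavior when $5\mid D_k$) for the non-convergence of $K(y_k)$ as $k\to\infty$ to pass through to the approximants at $x$. This relies on a uniform-in-$k$ analysis of the transfer matrix for one period of the Rogers-Ramanujan continued fraction at a primitive $D_k$-th root of unity, controlled by the refined growth bound of Lemma \ref{lem:denom-bound}. The improvement over \cite[Theorem 2]{Bowman} --- weakening $e_{n+1}\ge\phi^{d_n}$ to $e_{n+1}\ge c\,\lambda_R^{d_n/2}$ --- is precisely the gain from the sharper estimate $|Q_n(x)|\le C(R)\lambda_R^{n/2}$ on the arc $M_R$ relative to the generic bound $\phi^n$ on the full unit circle.
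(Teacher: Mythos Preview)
Your treatment of the cardinality and measure of $S_R$ is in line with the paper (which simply cites \cite[Lemma 3]{Bowman} for measure zero and observes the freedom in choosing each $e_{n+1}$ for uncountability). The divergence argument, however, takes a detour that does not close, and it bypasses the single input that makes the paper's proof work.

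First, a genuine gap: from Lemma~\ref{lem:div-prop} you get only $|Q_N(x)Q_{N-1}(x)|\to\infty$, not $|Q_N(x)|\to\infty$. The recursion $Q_n=Q_{n-1}+x^nQ_{n-2}$ with $|x|=1$ does not preclude $|Q_N|$ from staying bounded along a sparse subsequence, so the step ``in particular $|Q_{D_k}(x)|\to\infty$'' that drives your displayed inequality is unjustified.

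Second, and more structurally, your contradiction rests on the sequence $P_{D_k}(y_k)/Q_{D_k}(y_k)$ failing to converge as $k\to\infty$. You try to read this off Schur's closed form for $K(y_k)$, but $P_{D_k}(y_k)/Q_{D_k}(y_k)$ is a single convergent, not the limit $K(y_k)$; bridging that gap is exactly the ``main obstacle'' you flag, and there is no cheap way around it. Even if one grants boundedness of these ratios (which already requires knowing $Q_{D_k}(y_k)\ne 0$), showing that the phases in Schur's formula genuinely oscillate along your particular subsequence $\{D_k\}$ is not established.

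The paper avoids all of this by exploiting a far more elementary consequence of Schur's computation: at a primitive $m$-th root of unity $\zeta$ one has the uniform bound
\[
\max\bigl\{|Q_{m-1}(\zeta)|,\ |Q_{m-2}(\zeta)|\bigr\}\le 2.
\]
Applying the Lipschitz estimate of Lemma~\ref{lem:denom-lipschitz} at the indices $d_n(t)-1$ and $d_n(t)-2$ (note: not $d_n(t)$ and $d_n(t)-1$) together with Lemma~\ref{lem:cf-exp-ineq} and the $T_R$ condition yields, for infinitely many $n$,
\[
\bigl|Q_{d_n(t)-1}(x)\,Q_{d_n(t)-2}(x)\bigr|\le (B+2)^2,
\]
which is a bounded subsequence of $|Q_NQ_{N-1}|$ and hence contradicts Lemma~\ref{lem:div-prop} directly. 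There is no need to look at the ratios $P/Q$, no need for $|Q_N(x)|\to\infty$, and no need to track the behaviour of $K(y_k)$ as $k$ varies. The key idea your proposal is missing is precisely this direct appeal to the smallness of $Q_{m-1}$ and $Q_{m-2}$ at $m$-th roots of unity.
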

Before giving the proof, we illustrate the example mentioned in the Introduction.
\begin{Example}
We construct an example with $x\in S_R\setminus S$. As for $s, t\in\RN,$ 
\begin{equation*} \abs{e^{2\pi is}-e^{2\pi it}} \leq 2\pi \abs{s-t}, \end{equation*}
we need $x=e^{2\pi it}$ with $t$ close to $\frac 1 2$. As $\frac{\sqrt 5 - 1}{2\pi}\approx 0.1967\dots$, we may pick $R=2\pi \frac {15}{100} < \sqrt 5 - 1$ and $t$ close to $\frac 3 5=[0;1,2,5]$. Thus we take
\begin{equation*} \begin{array}{lllll}
e_1 = 1, & e_2 = 1, & e_3 = 2, & e_{n+1} = \gaussbkt{\lambda_R^{d_n/2}} & \text{for }n\geq 3; \\
d_1 = 1, & d_2 = 2, & d_3 = 5, & d_{n+1} = e_{n+1}d_n+ d_{n-1} & \text{for }n\geq 3.
\end{array}\end{equation*}
Explicitly, this gives
\begin{equation*} t=[0;e_1,e_2,e_3,\dots] = [0;1,1,2,9,611180631,\dots] \end{equation*}
and
\begin{equation*} \abs{t-\frac 1 2} \leq \frac 1 {10} + \abs{t-\frac 3 5} = \frac 1 {10} + \abs{t-[0;1,1,2]} \leq \frac 1 {10} + \frac 1 {5\cdot47} = \frac {49}{470} < 0.15 = \frac R {2\pi}. \end{equation*}
Hence, $x=e^{2\pi it}\in S_R$. Because $\lambda_R^{\frac 1 2}<\phi=\frac{1+\sqrt 5}2$, we have
\begin{equation*} \liminf_{n\rightarrow\infty} \frac{\phi^{d_n}}{e_{n+1}} = \liminf_{n\rightarrow\infty} \frac{\lambda_R^{\frac{d_n}2}}{e_{n+1}}\frac{\phi^{d_n}}{\lambda_R^{\frac{d_n}{2}}} = \infty, \end{equation*}
therefore $x\notin S$.\end{Example}
Let us now conclude the section with the proof of the above Theorem. 
\begin{proof}[Proof of Theorem \ref{thm:rrf-divergence}]
Let $x=e^{2\pi it}\in S_R$. Again, $c_n(t), d_n(t)$ denote the unreduced numerators and denominators of the continued fraction expansion of $t\in(0,1)$. Define $x_n=e^{2\pi ic_n(t)/d_n(t)}$. Since $\gcd(c_n(t),d_n(t)) = 1$, $x_n$ is a primitive $d_n(t)$-th root of unity. Let $n\in\NN$. As a direct consequence of the explicit values computed by Schur in \cite[p. 134]{Schur} we find
\begin{equation} \label{eqn:schur} \max\left\{\abs{Q_{d_n(t)-1}(x_n)},\abs{Q_{d_n(t)-2}(x_n)}\right\} \leq 2. \end{equation}
Now, for $x_n\in M_R,$ in turn by Lemma \ref{lem:denom-lipschitz}, the fact that the chord length is shorter than the arc length and Lemma \ref{lem:cf-exp-ineq}, we have  
\begin{align} \abs{Q_{d_n(t)-1}(x_n)-Q_{d_n(t)-1}(x)} &\leq A(R)\left(d_n(t)-1+\mu\right)^2\lambda_R ^{\frac{d_n(t)-1}2}\abs{x_n-x} \nonumber \\
							&\leq 2\pi A(R) \left(d_n(t)-1+\mu\right)^2\lambda_R ^{\frac{d_n(t)-1}2} \abs{\frac{c_n(t)}{d_n(t)}-t} \nonumber \\
							&\leq 2\pi A(R)\left(1+\frac{\mu-1}{d_n(t)}\right)^2\lambda_R ^{\frac{d_n(t)-1}2}\frac 1 {e_{n+1}(t)} \label{eqn:proof-1}
\end{align}
and a similar result for $d_n(t)-2$. As such, since $t\in T_R$, for infinitely many $n,$ the right-hand side is bounded by a finite constant, say $B>0$. Then, for these $n$, using the triangle inequality and inequalities \eqref{eqn:schur} and \eqref{eqn:proof-1}, we see that
\begin{equation*} \abs{Q_{d_n(t)-1}(x)~Q_{d_n(t)-2}(x)} \leq (B+2)^2 \end{equation*}
and the proof concludes by Lemma \ref{lem:div-prop}. It is clear that $S_R$ is an uncountable set, because for any $n$ there are infinitely many options to successively choose $e_{n+1}(t)$ and different continued fraction expansions lead to different points in $S_R$. The fact that it has measure zero follows from \cite[Lemma 3]{Bowman}.
\end{proof}



\section{Convergence of the Generalized Rogers-Ramanujan Continued Fraction}


In this section we study the convergence of the generalized Rogers-Ramanujan fraction at roots of unity. More precisely, we study the convergence of 
\[K_a(x)=\frac 11\cfplus aR_a(x). \] 
\subsection{Preparations}
Before we start, we make a small adjustment of the approximants $P_n,Q_n$ of $K_a,$ as defined in \eqref{PnQn}, which will prove to be more convenient for our computations; namely, we let $$\frac{P_n(a,x)}{Q_n(a,x)}=\frac{1}{1}\cfplus\contFrac_{k=0}^{n}\frac{ax^k}{1}.$$ 
The notations $P_n(a,x)$ and $Q_n(a,x)$ are used to point out the dependence on the variables $a$ and $x$.\\
\indent We will need a special class of determinants closely related to the approximants of the numerator and denominator. 

\begin{Def}[{\cite[p. 120]{Schur}}] 
\label{defn:tridiag-det} 
Let $x_1,\dots,x_m$ be formal variables or complex numbers. Then define
\begin{equation} \label{eqn:tridiag-det}
D(x_1,\dots,x_m) := \det\left(\begin{array}{cccccc}
 1     & x_1    &   0    & \cdots & \cdots & 0 \\
-1     &   1    & x_2    & \ddots &        & \vdots \\
 0     &  -1    &   1    & \ddots & \ddots & \vdots \\
\vdots & \ddots & \ddots & \ddots & \ddots & 0 \\
\vdots &        & \ddots & \ddots &   1    & x_m \\
 0     & \cdots & \cdots &   0    &  -1    & 1
\end{array}\right).
\end{equation}
\end{Def}


We want to summarize some of the properties of these determinants. The following can be found, for example in \cite[p. 11]{Perron}, but it is also cited in \cite[p. 133]{Schur}. For the sake of completeness, and as the original source is difficult to find, we include the proof here.

\begin{Lem}[{\cite[pp. 121, 133]{Schur}}]\label{lem:tridiag-det} 
Let $x_1,\dots,x_n$ be formal variables or complex numbers. Then:
\begin{equation*} \label{eqn:det-rec-rel} D(x_1,\dots,x_n) = D(x_1,\dots,x_{n-1}) + x_n~D(x_1,\dots,x_{n-2}) \end{equation*} and
\begin{equation} \label{eqn:det-slice-rel} D(x_1,\dots,x_n) = D(x_1,\dots,x_{m-1})~D(x_{m+1},\dots,x_n) + x_m~D(x_1,\dots,x_{m-2})~D(x_{m+2},\dots,x_n). \end{equation}
\begin{proof}
The first formula is obtained immediately by using the Laplace expansion on the last row or column. For the second formula we take a closer look into the combinatorics. 

The matrix in \eqref{eqn:tridiag-det} has tridiagonal form, i.e., only the entries on the main diagonal and the first diagonals above and below are non-zero. Then, in the definition of the determinant as a sum over permutations, $\sigma\in S_{n+1}$ contributes if and only if for all $i\in\{1,\dots,n+1\}$ we have $\abs{\sigma(i)-i} \leq 1$. This implies that $\sigma$ is a composition of disjoint transpositions of neighboring indices. We now distinguish $\sigma$ by what it does with the index $m$. Let $A=(a_{i,j})$ denote the matrix from (\ref{eqn:tridiag-det}). Then:
\begin{align*} 
D(x_1,\dots,x_n) &= \sum_{\sigma\in S_{n+1}} \sgn~\sigma\prod_{i=1}^{n+1} a_{i,\sigma(i)} = \sum_{\substack{\sigma\in S_{n+1} \\ \abs{\sigma(i)-i}\leq 1}} \sgn~\sigma\prod_{i=1}^{n+1} a_{i,\sigma(i)} \\
&= \sum_{\substack{\sigma\in S_{n+1} \\ \abs{\sigma(i)-i}\leq 1 \\ \sigma(m)\leq m}} \sgn~\sigma\prod_{i=1}^{n+1} a_{i,\sigma(i)} + \sum_{\substack{\sigma\in S_{n+1} \\ \abs{\sigma(i)-i}\leq 1 \\ \sigma(m)=m+1}} \sgn~\sigma\prod_{i=1}^{n+1} a_{i,\sigma(i)} \\
&= D(x_1,\dots,x_{m-1})D(x_{m+1},\dots,x_n) + x_mD(x_1,\dots,x_{m-2})D(x_{m+2},\dots,x_n).\qedhere
\end{align*}
\end{proof}
\end{Lem}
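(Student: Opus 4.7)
My plan is to handle the two identities in order, using the first as a warm-up for the combinatorial setup that makes the second transparent. I would adopt the natural conventions $D(\emptyset):=1$ and $D(x_1,\dots,x_{-1}):=0$ to cover the small-index boundary cases, which is also consistent with the convergent conventions $P_{-1}=0$, $P_0=1$ used earlier.

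For the recursion $D(x_1,\dots,x_n)=D(x_1,\dots,x_{n-1})+x_n\,D(x_1,\dots,x_{n-2})$, I would apply Laplace expansion along the last column of the $(n+1)\times(n+1)$ defining matrix. Only two entries there are nonzero: $x_n$ at position $(n,n+1)$ and $1$ at $(n+1,n+1)$. The cofactor at $(n+1,n+1)$ is immediately the tridiagonal minor $D(x_1,\dots,x_{n-1})$. For the cofactor at $(n,n+1)$, I would expand the minor further along its bottom row, whose only surviving entry is the subdiagonal $-1$; collecting the outer sign $(-1)^{n+(n+1)}$ with the sign $-1$ inherited from that subdiagonal entry, the result simplifies to $D(x_1,\dots,x_{n-2})$, and multiplying by $x_n$ produces the second summand of the recursion.

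For the splicing identity, the cleanest route is combinatorial. Since the matrix is tridiagonal, only permutations $\sigma\in S_{n+1}$ with $|\sigma(i)-i|\leq 1$ for every $i$ contribute to the Leibniz expansion, and such $\sigma$ must be involutions obtained as products of pairwise disjoint transpositions of adjacent indices, with $\sgn\,\sigma=(-1)^k$ where $k$ is the number of transpositions. I would partition the contributing permutations according to their behavior at index $m$: either (i) $\sigma(m)\leq m$, in which case $(m,m+1)$ is not a transposition of $\sigma$ and $\sigma$ restricts independently to admissible permutations of $\{1,\dots,m\}$ and of $\{m+1,\dots,n+1\}$, producing $D(x_1,\dots,x_{m-1})\,D(x_{m+1},\dots,x_n)$; or (ii) $\sigma(m)=m+1$, forcing $\sigma(m+1)=m$ and leaving $\sigma$ free on $\{1,\dots,m-1\}$ and $\{m+2,\dots,n+1\}$, producing $D(x_1,\dots,x_{m-2})\,D(x_{m+2},\dots,x_n)$ times the scalar contribution of the swap. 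Summing the two cases yields the right-hand side.

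The only subtlety I anticipate is the sign accounting in case (ii): the adjacent transposition $(m,m+1)$ carries $\sgn=-1$, while the product of the two involved matrix entries is $a_{m,m+1}\,a_{m+1,m}=x_m\cdot(-1)=-x_m$; the two minus signs cancel, leaving precisely $+x_m$ as the coefficient of the second term, as required. No deeper obstacle seems to be present; as a cross-check one could alternatively fix $m$ and induct on $n$ using the first recursion to peel $x_n$ off both $D(x_1,\dots,x_n)$ and $D(x_{m+1},\dots,x_n)$ and then recombine, but the combinatorial argument above is noticeably shorter and also explains why exactly these two terms appear.
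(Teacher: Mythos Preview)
Your proposal is correct and follows essentially the same route as the paper: Laplace expansion along the last column for the first identity, and the same combinatorial partition of the contributing permutations at the index $m$ for the second. You are in fact slightly more explicit than the paper in tracking the sign in case~(ii), which the paper leaves implicit.
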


With the notations above we get
\begin{equation} \label{eqn:approx-det} P_n(a,x) = D(ax,\dots,ax^n) \text{~~~~and~~~~} Q_n(a,x) = D(a,ax,\dots,ax^n) \end{equation}
for the $n$-th approximants of the numerator and denominator. This is because the sequences on both sides of the equations satisfy the same recurrence relation and have the same initial values. These identities then imply a new recurrence relation.


\begin{Lem} \label{lem:matr-rec}
Let $x$ be an $m$-th root of unity, $m\geq 3$. Then for $q\in\NN$ and $r\in\{0,\dots,m-1\}$, we have
\begin{equation} \label{eqn:matr-rec}
\left(\begin{array}{c} P_{(q+1)m+r}(a,x) \\ Q_{(q+1)m+r}(a,x) \end{array}\right) =
\left(\begin{array}{cc} ax^{m-1}P_{m-3}(a,x) & P_{m-2}(a,x) \\ ax^{m-1}Q_{m-3}(a,x) & Q_{m-2}(a,x) \end{array}\right) \left(\begin{array}{c} P_{qm+r}(a,x) \\ Q_{qm+r}(a,x) \end{array}\right).
\end{equation}
\begin{proof}
We use the representation \eqref{eqn:approx-det} of the approximants and use \eqref{eqn:det-slice-rel} to find that
\begin{align*} P_{(q+1)m+r}(a,x) &\stackrel{\text{\eqref{eqn:approx-det}}}= D(ax,\dots,ax^{(q+1)m+r}) \\
&\stackrel{\text{\eqref{eqn:det-slice-rel}}}= ax^{m-1}~D(ax,\dots,ax^{m-3})~D(ax^{m+1},\dots,ax^{(q+1)m+r}) \\
&+ D(ax,\dots,ax^{m-2})~D(ax^m,\dots,ax^{(q+1)m+r}) \\
&\stackrel{x^m=1}= ax^{m-1}~D(ax,\dots,ax^{m-3})~D(ax,\dots,ax^{qm+r}) \\
&+ D(ax,\dots,ax^{m-2})~D(a,\dots,ax^{qm+r}) \\
&= ax^{m-1}~P_{m-3}(a,x)~P_{qm+r}(a,x) + P_{m-2}(a,x)~Q_{qm+r}(a,x).
\end{align*}
The same procedure works for $Q_{(q+1)m+r}(a,x)$.
\end{proof}
\end{Lem}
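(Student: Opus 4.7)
The plan is to exploit the determinant representation of the approximants from \eqref{eqn:approx-det} together with the splitting identity \eqref{eqn:det-slice-rel} in Lemma \ref{lem:tridiag-det}, and to let the relation $x^m=1$ collapse the ``shifted'' tridiagonal determinants back into genuine $P$'s and $Q$'s. Concretely, the key observation is that substituting $x^m = 1$ turns $D(ax^{m+1},\dots,ax^{(q+1)m+r})$ into $D(ax,\dots,ax^{qm+r}) = P_{qm+r}(a,x)$, and similarly $D(ax^m,\dots,ax^{(q+1)m+r})$ into $D(a,ax,\dots,ax^{qm+r}) = Q_{qm+r}(a,x)$. This is the structural reason the period-$m$ transfer matrix exists.

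First, I would write $P_{(q+1)m+r}(a,x) = D(ax,\dots,ax^{(q+1)m+r})$ by \eqref{eqn:approx-det}, and apply the slicing formula \eqref{eqn:det-slice-rel} at the cut point corresponding to the $m$-th entry $ax^{m-1}$, so that the left block is indexed by $ax,\dots,ax^{m-2}$ and $ax,\dots,ax^{m-3}$ while the right block begins at $ax^m$ or $ax^{m+1}$. Reindexing the right block via $x^m=1$ gives the first row of the claimed matrix identity:
\begin{equation*}
P_{(q+1)m+r}(a,x) = ax^{m-1}\,P_{m-3}(a,x)\,P_{qm+r}(a,x) + P_{m-2}(a,x)\,Q_{qm+r}(a,x).
\end{equation*}

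Next, I would repeat exactly the same computation starting from $Q_{(q+1)m+r}(a,x) = D(a,ax,\dots,ax^{(q+1)m+r})$. The slicing at the $m$-th position (now shifted by one because of the extra leading $a$) produces left blocks $D(a,ax,\dots,ax^{m-2})$ and $D(a,ax,\dots,ax^{m-3})$, i.e., $Q_{m-2}(a,x)$ and $Q_{m-3}(a,x)$, while the right blocks again reduce to $P_{qm+r}(a,x)$ and $Q_{qm+r}(a,x)$ under $x^m=1$. This yields
\begin{equation*}
Q_{(q+1)m+r}(a,x) = ax^{m-1}\,Q_{m-3}(a,x)\,P_{qm+r}(a,x) + Q_{m-2}(a,x)\,Q_{qm+r}(a,x),
\end{equation*}
which is the second row, and combining the two lines gives \eqref{eqn:matr-rec}.

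I do not anticipate any deep obstacle: the only delicate point is index bookkeeping, namely verifying that the split position really falls on the entry $ax^{m-1}$ (so that the $x_m$ factor appearing in \eqref{eqn:det-slice-rel} is precisely $ax^{m-1}$) and that the right blocks begin at $ax^m$ respectively $ax^{m+1}$ so that the reduction by $x^m=1$ leaves a block indexed by exactly $qm+r$ terms. The hypothesis $m\geq 3$ is needed so that the left block in the $P$-computation still makes sense (indices $m-3\geq 0$), and the conventions $P_{-1}=1$, $P_{-2}=0$ cover the boundary cases $m=3$.
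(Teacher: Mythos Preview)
Your proposal is correct and follows essentially the same route as the paper: apply the determinant representation \eqref{eqn:approx-det}, slice with \eqref{eqn:det-slice-rel} at the entry $ax^{m-1}$, and use $x^m=1$ to identify the right blocks with $P_{qm+r}$ and $Q_{qm+r}$. The paper carries out the $P$-row explicitly and simply remarks that the $Q$-row is analogous, which is exactly what you outline; your only slip is calling $ax^{m-1}$ the ``$m$-th entry'' in the $P$-case (it is the $(m-1)$-th), though for $Q$ it genuinely is the $m$-th, and for $m=3$ one needs the empty-determinant convention $D()=1$ (i.e.\ $P_0=1$) rather than $P_{-1},P_{-2}$.
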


In what follows, $\genbinom{m}k _x$ denotes the $x$-binomial coefficient (or generalized binomial coefficient), that is, 
\[\genbinom{m}k _x:=\frac{(x^m-1)\cdots(x^{m-k+1}-1)}{(x^k-1)\cdots(x-1)}.\] 
These results already occur in Ramanujan's original work \cite[p. 57]{Ramanujan} and are proved in \cite[Theorem 5.1]{Huang}.
\begin{Lem} [{\cite[Theorem 5.1]{Huang}}] \label{lem:qp-eqns}
Let $a,x$ be formal variables. Then $P_m(a,x), Q_m(a,x) \in\IN[x][a]$ are given by
\begin{equation*} \label{eqn:p-formula} P_m(a,x) = 1+\sum_{k=1}^{\gaussbkt{\frac {m+1} 2}} \genbinom{m+1-k}k _xx^{k^2}a^k \end{equation*} and
\begin{equation*} \label{eqn:q-formula} Q_m(a,x) = 1+\sum_{k=1}^{\gaussbkt{\frac m 2}+1} \genbinom{m+2-k}k _xx^{k(k-1)}a^k. \end{equation*}
\begin{proof}
One easily checks that the formulae hold for $m\in\{0,1\}$. Since both sequences satisfy the recursion relation
\begin{equation*} R_m(a,x) = R_{m-1}(a,x) + ax^m~R_{m-2}(a,x), \end{equation*}
the induction step is completed by using the well-known identity (see, for e.g., \cite[p. 128]{Schur})
\begin{equation}\label{binom} \genbinom m k _x = \genbinom{m-1}{k-1} _x~x^{m-k} + \genbinom{m-1}k _x. \qedhere \end{equation} \end{proof}
\end{Lem}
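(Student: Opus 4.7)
The plan is to establish both identities simultaneously by induction on $m$, using that the sequences $\{P_m(a,x)\}$ and $\{Q_m(a,x)\}$ share the three-term recurrence $R_m(a,x) = R_{m-1}(a,x) + ax^m R_{m-2}(a,x)$, and then invoking the $x$-binomial identity \eqref{binom} to collapse the resulting two-term coefficient of each $a^k$ into a single $x$-binomial matching the claim.

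First I would verify the base cases $m\in\{0,1\}$ directly from the determinantal representation \eqref{eqn:approx-det}: these yield $P_0 = D() = 1$, $P_1 = D(ax) = 1+ax$, $Q_0 = D(a) = 1+a$, and $Q_1 = D(a,ax) = 1+a+ax$, each of which agrees with the proposed formula after evaluating the small $x$-binomials $\genbinom{1}{1}_x = 1$ and $\genbinom{2}{1}_x = 1+x$.

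For the inductive step, I would substitute the formulae at levels $m-1$ and $m-2$ into the recurrence, distribute $ax^m$ across the sum coming from $R_{m-2}$, and reindex the shifted sum via $k\mapsto k-1$ so that both sums are indexed by the exponent of $a$. For $k$ in the interior of the summation range, the coefficient of $a^k$ then takes the shape
\begin{equation*} \genbinom{M-k}{k}_x x^{c_k} + \genbinom{M-k}{k-1}_x x^{c_{k-1}+m}, \end{equation*}
with $M=m$ and $c_k=k^2$ in the case of $P_m$, and $M=m+1$ and $c_k=k(k-1)$ in the case of $Q_m$. A direct check gives $c_{k-1}+m = c_k + (M+1-2k)$ in both cases, so the above expression factors as $x^{c_k}\bigl(\genbinom{M-k}{k}_x + \genbinom{M-k}{k-1}_x x^{M+1-2k}\bigr)$, and \eqref{binom} applied with upper index $M+1-k$ collapses the parenthesis to $\genbinom{M+1-k}{k}_x$. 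This is precisely the coefficient of $a^k$ claimed at level $m$.

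The only real obstacle — purely combinatorial bookkeeping — is the behaviour at the endpoints of the summation. The upper index $\gaussbkt{(m+1)/2}$ (respectively $\gaussbkt{m/2}+1$) advances by one only at every other value of $m$, so depending on the parity of $m$ the top term of $R_m$ is produced either only by the shifted sum coming from $R_{m-2}$, or by both sums via the same identity. Similarly, the $k=1$ contribution is obtained by combining the bare ``$1$'' in the expansion of $R_{m-2}$ (which produces $ax^m$) with the $k=1$ term of $R_{m-1}$. Splitting the induction step into the cases $m$ even and $m$ odd and using the trivial evaluations $\genbinom{k}{k}_x = 1$ and $\genbinom{N}{1}_x = 1+x+\cdots+x^{N-1}$ handles these endpoints uniformly and closes the induction.
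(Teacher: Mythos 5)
Your proposal is correct and follows essentially the same route as the paper: verify the base cases $m\in\{0,1\}$, feed the inductive hypotheses into the shared recurrence $R_m=R_{m-1}+ax^mR_{m-2}$, and collapse the two contributions to each coefficient of $a^k$ via the $x$-binomial identity \eqref{binom}. You simply spell out the exponent bookkeeping and the endpoint/parity cases that the paper leaves implicit.
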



It is essential to understand the eigenvalues of the matrix in \eqref{eqn:matr-rec}, as they will determine the limit behavior of the approximants $P_n$ and $Q_n$. The eigenvalues are determined by the characteristic polynomial. As we have a $2\times2$ matrix, we only need to know the trace and determinant. For $x$ any primitve $m$-th root of unity, the determinant is easily calculated from \eqref{eqn:det-form} to be
\begin{align} \nonumber \det\left(\begin{array}{cc} ax^{m-1}P_{m-3}(a,x) & P_{m-2}(a,x) \\ ax^{m-1}Q_{m-3}(a,x) & Q_{m-2}(a,x) \end{array}\right) &= -ax^{m-1}\left(P_{m-2}Q_{m-3}-P_{m-3}Q_{m-2}\right) \\\nonumber
&= (-1)^m a x^{m-1}\prod_{k=0}^{m-2}ax^k \\\nonumber
&= (-1)^m a^m x^{\frac{m(m-1)}2} = -a^m. \end{align}
On invoking Lemma \ref{lem:qp-eqns} and recursion \eqref{binom} we can also determine the trace.

\begin{Lem} \label{thm:gen-rrf-tr}
Let $a, x$ be formal variables. Consider $P_n(a,x)$, $Q_n(a,x)$ as sequences in $\IN[x][a]$. For all $m\geq 3$ define
\begin{equation*} \label{eqn:gen-rrf-tr} T_m(a,x) = ax^{m-1}P_{m-3}(a,x)+Q_{m-2}(a,x) \in \IN[x][a]. \end{equation*}
Then $T_m(a,x)$ has the form
\begin{equation*} T_m(a,x) = 1 + \sum_{k=1}^{\gaussbkt{\frac m 2}} T_{m,k}(x)x^{k(k-1)}a^k, \end{equation*}
where 
\begin{equation*} T_{m,k}(x) = \frac {x^m-1}{x^{m-k}-1}~\genbinom{m-k}{k}_x. \end{equation*}
\end{Lem}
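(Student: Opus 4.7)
The plan is to substitute the closed-form expressions from Lemma \ref{lem:qp-eqns} into the definition $T_m(a,x) = ax^{m-1}P_{m-3}(a,x) + Q_{m-2}(a,x)$ and collect the coefficient of each power of $a$. By Lemma \ref{lem:qp-eqns}, using that $\lfloor(m-2)/2\rfloor+1 = \lfloor m/2\rfloor$ in both parities,
$$Q_{m-2}(a,x) = 1 + \sum_{k=1}^{\lfloor m/2\rfloor} \genbinom{m-k}{k}_x x^{k(k-1)}\, a^k.$$
Expanding $ax^{m-1}P_{m-3}(a,x)$ and reindexing by $k := k'+1$, then absorbing the lone term $ax^{m-1}$ via the convention $\genbinom{m-2}{0}_x = 1$, yields
$$ax^{m-1}P_{m-3}(a,x) = \sum_{k=1}^{\lfloor m/2\rfloor} \genbinom{m-1-k}{k-1}_x x^{m-1+(k-1)^2}\, a^k.$$
The key observation is the arithmetic identity $m-1+(k-1)^2 = k(k-1) + (m-k)$, so both sums carry the common factor $x^{k(k-1)}$, and the coefficient of $x^{k(k-1)} a^k$ in $T_m(a,x)$ becomes
$$\genbinom{m-k}{k}_x + x^{m-k}\genbinom{m-1-k}{k-1}_x.$$

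The central task is then to match this expression to $T_{m,k}(x) = \frac{x^m-1}{x^{m-k}-1}\genbinom{m-k}{k}_x$. I would prove the $x$-binomial identity
$$\genbinom{m-k}{k}_x + x^{m-k}\genbinom{m-1-k}{k-1}_x = \frac{x^m-1}{x^{m-k}-1}\genbinom{m-k}{k}_x$$
by factoring out $\genbinom{m-1-k}{k-1}_x$ via the elementary relation
$$\genbinom{m-k}{k}_x = \frac{x^{m-k}-1}{x^k-1}\genbinom{m-1-k}{k-1}_x,$$
which reduces the claim to the trivial identity $\frac{x^{m-k}-1}{x^k-1} + x^{m-k} = \frac{x^m-1}{x^k-1}$. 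Equivalently, one can derive the same result by a double application of \eqref{binom}.

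I do not anticipate any serious obstacles, as the calculation is purely formal once Lemma \ref{lem:qp-eqns} is in hand. The only care required concerns the boundary values of the summation: at $k=1$ one checks by hand that $\genbinom{m-1}{1}_x + x^{m-1} = \frac{x^m-1}{x-1} = T_{m,1}(x)$, and at $k = \lfloor m/2\rfloor$ one must verify the identity still holds (for even $m$ this uses $\genbinom{m/2}{m/2}_x = 1$). Both verifications are immediate, as is the matching of the constant term $1$ contributed solely by $Q_{m-2}$.
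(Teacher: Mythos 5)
Your proof is correct and follows exactly the route the paper indicates (the paper omits the details, remarking only that the lemma follows from Lemma \ref{lem:qp-eqns} and the recursion \eqref{binom}): substitute the closed forms for $Q_{m-2}$ and $ax^{m-1}P_{m-3}$, align the powers of $x$ via $m-1+(k-1)^2=k(k-1)+(m-k)$, and verify the resulting $x$-binomial identity $\genbinom{m-k}{k}_x + x^{m-k}\genbinom{m-1-k}{k-1}_x = \frac{x^m-1}{x^{m-k}-1}\genbinom{m-k}{k}_x$. Your index bookkeeping (upper limits $\gaussbkt{\frac{m-2}{2}}+1=\gaussbkt{\frac m2}$, the $k=1$ and $k=\gaussbkt{\frac m2}$ boundary terms) and the reduction to $\frac{x^{m-k}-1}{x^k-1}+x^{m-k}=\frac{x^m-1}{x^k-1}$ all check out.
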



The fact that $T_m(a,x)=1$ for $x$ an $m$-th root of unity can be alternatively proven by invoking Theorem 2.2 in \cite{Huang}. Huang \cite{Huang} considers the following version of the generalized Rogers-Ramanujan continued fraction $$R(a)=\frac{a}{1}\cfplus \frac{ax}{1}\cfplus\frac{ax^2}{1}\cfplus\cfdots.$$ Let $p_n,q_n$ be the unreduced $n$-th numerator and denominator of $R(a)$, for $n\ge 1$. Note that \begin{equation} 
\label{connectionHuang}
K_a(x)=\frac{1}{1+R(a)}.
\end{equation} If we now let $\widetilde{P}_n,\widetilde{Q}_n$ be as in \eqref{PnQn}, we see that with our notation, for $n\ge 2,$ we have $\widetilde{P}_n=P_{n-2}$ and $\widetilde{Q}_n=Q_{n-2}.$ By \eqref{connectionHuang} it easily follows that 
\begin{equation*}
\label{newPQ}
\widetilde{P}_n=q_{n-1}\quad\text{and}\quad
\widetilde{Q}_n=p_{n-1}+q_{n-1}.
\end{equation*}
As such, 
\begin{align*}
T_m(a,x)&=ax^{m-1}P_{m-3}(a,x)+Q_{m-2}(a,x)=ax^{m-1}\widetilde{P}_{m-1}+\widetilde{Q}_{m}\\
&=ax^{m-1}q_{m-2}+(p_{m-1}+q_{m-1})=p_{m-1}+\left( q_{m-1}+ax^{m-1}q_{m-2}\right) \\
&=p_{m-1}+q_m=1.
\end{align*}  
For self-containment we recall some notations from \cite{Huang}. We define two sets
\[\mathcal{A}_n:=\left\lbrace \vec{v}=(n_1,\ldots,n_r)\in\mathbb{N}^r:r\ge 1,n_1=1,n_{i+1}-n_i\ge 2,\text{ and }n_r\le n\right\rbrace \]
and 
\[\mathcal{B}_n:=\left\lbrace \vec{v}=(n_1,\ldots,n_r)\in\mathbb{N}^r:r\ge 1,n_1\ge2,n_{i+1}-n_i\ge 2,\text{ and }n_r\le n\right\rbrace. \]
Next, let $\mathcal{A}_n(l)$ and $\mathcal{B}_n(l)$ be the subsets of $\mathcal{A}_n$ and $\mathcal{B}_n$ which contain all the $l$-dimensional vectors and $\mathcal{C}_n(l):=\mathcal{A}_{n-1}(l)\cup \mathcal{B}_n(l). $ By the proof of Theorem 2.2 in \cite{Huang}, 
we show that $$1 + \sum_{k=1}^{\gaussbkt{\frac m 2}} T_{m,k}(x)x^{k(k-1)}a^k=T_m(a,x)=1+\sum_{k=1}^{\gaussbkt{\frac m 2}}a^kx^{-k}\left( \sum_{\vec{v}\in\mathcal{C}_m(r)} x^{n_1+\cdots+n_r} \right), $$
from where the identity
\[\sum_{k=1}^{\gaussbkt{\frac m 2}} T_{m,k}(x)~x^{k(k-1)}~a^k=\sum_{k=1}^{\gaussbkt{\frac m 2}}a^kx^{-k}\left( \sum_{\vec{v}\in\mathcal{C}_m(r)} x^{n_1+\cdots+n_r} \right)\]
follows.\\

However, this should not come as a surprise, as the sets $\mathcal{A}_n(l)$ and $\mathcal{B}_n(l)$ parametrize precisely the permutations that contribute to the determinant formula \eqref{eqn:approx-det}, namely, the $n_i$ is the 
lower index of any transposition of neighboring indices, as explained 
in the proof of Lemma \ref{lem:tridiag-det}.

\subsection{Proof of the main convergence results}


We are now ready to prove the following.

\begin{Thm} 
Let $\zeta_m$ be a primitive $m$-th root of unity and $a\in\CN$. Assume $\sqrt{\frac 1 4 + a^m}\notin \QN(a,\zeta_m)$. Then 
$K_a$ converges at $x=\zeta_m$ to some limit in $\QN\left(a,\sqrt{\frac 1 4 + a^m},\zeta_m\right)\subset\CN 
$ if and only if $\frac 1 4 + a^m\notin \RN_{\leq 0}.$
In this case, the limit is given by
\begin{equation}\label{eqn:gen-rrf-limit} K_a(\zeta_m) = \frac {P_{m-2}(a,\zeta_m)} {\frac 1 2 + \sqrt{\frac 1 4 +a^m} - a\zeta_m^{m-1}P_{m-3}(a,\zeta_m)},\end{equation}
where we set $P_{-1}(a,x):=1$ and $P_{-2}(a,x):=0$ for convenience.
\end{Thm}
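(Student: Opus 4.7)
The overall plan is to reduce convergence of $P_n/Q_n$ to the long-time behaviour of the iterates of a single $2\times 2$ matrix. By Lemma \ref{lem:matr-rec}, evaluating the matrix recursion at $x=\zeta_m$ produces
\[\begin{pmatrix} P_{(q+1)m+r}(a,\zeta_m) \\ Q_{(q+1)m+r}(a,\zeta_m) \end{pmatrix} = M\begin{pmatrix} P_{qm+r}(a,\zeta_m) \\ Q_{qm+r}(a,\zeta_m)\end{pmatrix}, \qquad r=0,\ldots,m-1,\]
where $\tr M=T_m(a,\zeta_m)$ and $\det M=-a^m$. The determinant is immediate from \eqref{eqn:det-form}. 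For the trace, observe that every coefficient $T_{m,k}$ appearing in Lemma \ref{thm:gen-rrf-tr} carries a factor $\zeta_m^m-1=0$ in its numerator, while the denominator $\zeta_m^{m-k}-1$ is nonzero for $1\le k\le \lfloor m/2\rfloor$; hence $\tr M=1$. The characteristic polynomial of $M$ is therefore $\lambda^2-\lambda-a^m$, with roots $\lambda_\pm=\tfrac12\pm\sqrt{\tfrac14+a^m}$.

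The irrationality hypothesis $\sqrt{\tfrac14+a^m}\notin\QN(a,\zeta_m)$ does two jobs at once. First, the discriminant $1+4a^m$ is nonzero, so $\lambda_+\neq\lambda_-$ and $M$ is diagonalisable over $\QN(a,\sqrt{\tfrac14+a^m},\zeta_m)$. Second, neither $\lambda_\pm$ lies in $\QN(a,\zeta_m)$; the Galois-type consequence I would extract is that $M$ has no nonzero eigenvector in $\QN(a,\zeta_m)^2$, for such a vector would pin its eigenvalue inside the ground field. Applied to the starting vectors $v_r:=(P_r(a,\zeta_m),Q_r(a,\zeta_m))^T$ (which are nonzero by \eqref{eqn:det-form}, the trivial case $a=0$ being handled directly), this forces the eigenbasis decomposition $v_r=c_{r,+}u_++c_{r,-}u_-$ to have both coefficients $c_{r,\pm}\neq 0$.

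With the branch of the square root chosen to have nonnegative real part, a short computation gives $|\lambda_+|^2-|\lambda_-|^2=2\Re\sqrt{\tfrac14+a^m}$, so $|\lambda_+|>|\lambda_-|$ precisely when $\tfrac14+a^m\notin\RN_{\le 0}$. Under this assumption, iteration of the matrix yields
\[\frac{P_{qm+r}(a,\zeta_m)}{Q_{qm+r}(a,\zeta_m)} = \frac{(u_+)_1+(c_{r,-}/c_{r,+})(\lambda_-/\lambda_+)^q(u_-)_1}{(u_+)_2+(c_{r,-}/c_{r,+})(\lambda_-/\lambda_+)^q(u_-)_2}\longrightarrow \frac{(u_+)_1}{(u_+)_2}\quad\text{as }q\to\infty,\]
and because this limit depends only on $u_+$, not on $r$, all $m$ residue-class subsequences converge to a common value and so does $P_n/Q_n$. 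Reading $(u_+)_1/(u_+)_2$ off the first row of the eigenvalue equation $(M-\lambda_+I)u_+=0$ reproduces exactly the closed form \eqref{eqn:gen-rrf-limit}, which visibly belongs to $\QN(a,\sqrt{\tfrac14+a^m},\zeta_m)$. The reverse implication, divergence when $\tfrac14+a^m\in\RN_{<0}$, is the content of Theorem \ref{thm:gen-rrf-divergence}: there $\lambda_-/\lambda_+$ is a nontrivial unimodular number, the M\"obius expression above oscillates on a nontrivial orbit on the Riemann sphere, and convergence fails.

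The main difficulty I anticipate is not any single calculation but the linear-algebraic step guaranteeing that $v_r$ is never accidentally aligned with one of the two eigenlines; without the irrationality hypothesis one could imagine $c_{r,\pm}=0$ for some residue, in which case the slower eigenvalue would dictate the limit along that subsequence and either spoil convergence or alter the limit formula. The Galois observation dispatches this concern cleanly, and once the $c_{r,\pm}$ are known to be nonzero the proof reduces to the clean dichotomy $|\lambda_+|>|\lambda_-|$ versus $|\lambda_+|=|\lambda_-|$.
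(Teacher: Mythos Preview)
Your outline matches the paper's proof almost step for step: matrix recursion from Lemma~\ref{lem:matr-rec}, trace $=1$ via Lemma~\ref{thm:gen-rrf-tr}, determinant $=-a^m$, diagonalisation of the $2\times 2$ matrix, the Galois-type observation that no eigenvector can lie in $\QN(a,\zeta_m)^2$ (hence both coefficients $c_{r,\pm}$ are nonzero), and the dichotomy $|\lambda_+|>|\lambda_-|$ versus $|\lambda_+|=|\lambda_-|$. The paper even computes the explicit eigenvector $v_+=(P_{m-2},\ \lambda_+-a\zeta_m^{m-1}P_{m-3})^T$ you read off from the first row of $M-\lambda_+\id_2$. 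One organisational difference: the paper handles the divergence half directly here by showing the approximants are not Cauchy (inequality~\eqref{eqn:gen-rrf-div}), whereas you defer it to Theorem~\ref{thm:gen-rrf-divergence}; both are fine.

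There is, however, one genuine omission. Lemma~\ref{lem:matr-rec} is stated only for $m\geq 3$, and your entire argument rests on that matrix recursion. For $m=1$ and $m=2$ the entries $Q_{m-2},Q_{m-3}$ are not even defined, so the matrix $M$ is unavailable. The paper treats these two cases separately at the end of its proof: for $m=1$ one analyses the constant-coefficient recursion $R_n=R_{n-1}+aR_{n-2}$ directly, and for $m=2$ one applies Rogers' trick to obtain $R_{2q+r}=R_{2(q-1)+r}+a^2R_{2(q-2)+r}$, after which the same eigenvalue argument (with characteristic polynomial $\lambda^2-\lambda-a^m$) goes through. You should add a sentence or two acknowledging and dispatching these small cases.
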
 
\begin{proof}
Let $x=\zeta_m$ be a primitive $m$-th root of unity. If $m\geq 3$, the recursion from Lemma \ref{lem:matr-rec} holds. In this case we want to analyze the asymptotic behavior of $\{P_{qm+r}\}_q$ and $\{Q_{qm+r}\}_q$ using the matrix
\begin{equation*} A_m = \left(\begin{array}{cc} a\zeta_m^{m-1}P_{m-3}(a,\zeta_m) & P_{m-2}(a,\zeta_m) \\ a\zeta_m^{m-1}Q_{m-3}(a,\zeta_m) & Q_{m-2}(a,\zeta_m) \end{array}\right). \end{equation*}
Using the previous results, we compute the characteristic polynomial to be
\begin{equation*} \lambda^2 - \lambda - a^m = \left(\lambda - \frac 1 2 - \sqrt{\frac 1 4 + a^m}\right)\left(\lambda - \frac 1 2 + \sqrt{\frac 1 4 + a^m}\right) .\end{equation*}
By the assumption on $a^m$, the eigenvalues are different and thus the matrix can be diagonalized. We calculate the eigenvalues and eigenvectors to be
\begin{equation*} \lambda_\pm = \frac 1 2 \pm \sqrt{\frac 1 4 + a^m} \text{~~~~and~~~~} v_\pm = \left(\begin{array}{c} v_{1\pm} \\ v_{2\pm} \end{array}\right) = \left(\begin{array}{c} P_{m-2}(a,\zeta_m) \\ \lambda_\pm - a\zeta_m^{m-1} P_{m-3}(a,\zeta_m) \end{array}\right). \end{equation*}
Using Lemma \ref{lem:qp-eqns}, we see that the matrix has entries in the field $\QN(a,\zeta_m)$. By assumption, $\sqrt{\frac 1 4 + a^m}$ is not in this field. Hence the main diagonal of $A_m-\lambda_\pm\id_2$ cannot vanish. As the rows and columns of this matrix are multiples of each other, none of the entries can vanish, making the eigenvectors above well-defined. We note that the eigenvectors are forced to be in $\QN\left(a,\zeta_m,\sqrt{\frac 1 4+a^m}\right)^2\setminus\QN(a,\zeta_m)^2$. 
Consider now the eigenvector expansion for $r\in\{0,\dots,m-1\}:$
\begin{equation*} \left(\begin{array}{c} P_r(a,\zeta_m) \\ Q_r(a,\zeta_m) \end{array}\right) = a_{r+} v_+ + a_{r-} v_- \stackrel{\eqref{eqn:matr-rec}}\Longrightarrow \left(\begin{array}{c} P_{qm+r}(a,\zeta_m) \\ Q_{qm+r}(a,\zeta_m) \end{array}\right) = a_{r+} \lambda_+^q v_+ + a_{r-} \lambda_-^q v_-.\end{equation*}
As the left-hand side is in $\QN(a,\zeta_m)^2$, none of the coefficients $a_{r\pm}$ can vanish. We can now finally analyze the convergence behavior.
If we use the principal branch to define the complex square root, then $\abs{\lambda_+}\geq\abs{\lambda_-}$. Equality is equivalent to $\frac 1 4 + a^m \in \RN_{\leq 0}$. We find for all $r\in\{0,\dots,m-1\}$ that
\begin{equation*} \frac {P_{qm+r}(a,\zeta_m)} {Q_{qm+r}(a,\zeta_m)} = \frac {v_{1+} + \frac{a_{r-}}{a_{r+}}~\left(\frac{\lambda_-}{\lambda_+}\right)^q~v_{1-}} {v_{2+} + \frac{a_{r-}}{a_{r+}}~\left(\frac{\lambda_-}{\lambda_+}\right)^q~v_{2-}} .\end{equation*}
If $\abs{\lambda_+}>\abs{\lambda_-}$, the expression is convergent for all $r$ and the limits are equally given by the assertion. If $\abs{\lambda_+}=\abs{\lambda_-}$, the ratio can be written as $e^{i\varphi}$, $\varphi\in(-\pi,0)$ as $\operatorname{Re}\lambda_\pm=\frac 1 2$ and $\lambda_\pm\notin\QN$. We then find
\begin{align} \nonumber \abs{\frac{P_{(q+1)m+r}(a,\zeta_m)}{Q_{(q+1)m+r}(a,\zeta_m)} - \frac{P_{qm+r}(a,\zeta_m)}{Q_{qm+r}(a,\zeta_m)}} &= \abs{\frac{\frac{a_{r-}}{a_{r+}}e^{i\varphi q}(1-e^{i\varphi})~(v_{1+}v_{2-}-v_{1-}v_{2+})}{\left(v_{2+} + \frac{a_{r-}}{a_{r+}}e^{i\varphi (q+1)}v_{2-}\right)\left(v_{2+} + \frac{a_{r-}}{a_{r+}}e^{i\varphi q}v_{2-}\right)}} \\
\label{eqn:gen-rrf-div} &\geq \frac {\abs{a_{r-}}\abs{a_{r+}} \abs{1-e^{i\varphi}} \abs{\det(v_+|v_-)}} {\left(\abs{a_{r+}}\abs{v_{2+}} + \abs{a_{r-}}\abs{v_{2-}}\right)^2} > 0 .\end{align}
Thus the approximants are not Cauchy and in particular not convergent. \\

If $m=1$, \eqref{eqn:matr-rec} does not hold. Nevertheless, one obtains the same results by analyzing directly the recursion relations 
\begin{equation*} R_m(a,1) = R_{m-1}(a,1) + a R_{m-2}(a,1) \end{equation*} 
for $P$ and $Q$. This allows to directly calculate the limit for $\frac 1 4 + a\notin \RN_{\leq 0}$ by similar arguments as above. The divergence proof is similar. \\

If $m=2$, \eqref{eqn:matr-rec} is invalid again. Applying Rogers' trick -- the main idea of Section 2 -- leads to 
\begin{equation*} R_{2q+r}(a,-1) = R_{2(q-1)+r}(a,-1) + a^2 R_{2(q-2)+r}(a,-1) \end{equation*} 
for $P$ and $Q$. By elementary means, one directly computes convergence and the limit given above for $\sqrt{\frac 1 4 + a^2}\notin\QN(a)$. 
\end{proof}


As a partial converse to the previous result, we show the following. 

\begin{Thm} 
Let $\zeta_m\in\CN$ be a primitive $m$-th root of unity and $a\in\CN$. Assume $\frac 1 4 + a^m\in \RN_{<0}$. Then the generalized Rogers-Ramanujan continued fraction is divergent at $x=\zeta_m$. 
\begin{proof}
The idea is to establish the inequality \eqref{eqn:gen-rrf-div} independent of the algebraic condition $\sqrt{\frac 1 4 + a^m}\notin\QN(a,\zeta_m).$ We need to find some $r\in\left\lbrace 0,1,\dots,m-1\right\rbrace $ such that both vectors in the eigenvector expansion do not vanish. The recursion relations for the $P_n$ and $Q_n$ can be written in vector form:
\begin{equation*} \left(\begin{array}{c} P_n(a,\zeta_m) \\ Q_n(a,\zeta_m) \end{array}\right) = \left(\begin{array}{c} P_{n-1}(a,\zeta_m) \\ Q_{n-1}(a,\zeta_m) \end{array}\right) + a\zeta_m^n\left(\begin{array}{c} P_{n-2}(a,\zeta_m) \\ Q_{n-2}(a,\zeta_m) \end{array}\right). \end{equation*}
Assume, for some $n,$ that $(P_n(a,\zeta_m), Q_n(a,\zeta_m))$ is an eigenvector of the matrix in \eqref{eqn:matr-rec}. The vector of index $n-1$ cannot be an eigenvector of the same eigenvalue, because the two vectors are linearly independent by the following determinant formula:
\begin{equation*} \det\left(\begin{array}{cc} P_n(a,\zeta_m) & P_{n-1}(a,\zeta_m) \\ Q_n(a,\zeta_m) & Q_{n-1}(a,\zeta_m) \end{array}\right) \stackrel{\eqref{eqn:det-form}}= (-1)^{n+1} a^{n+1} \zeta_m^{\frac{n(n+1)}2} \neq 0. \end{equation*}
If the index $n-1$ vector were an eigenvector of the other eigenvalue, then, in order to satisfy the recursion relation, the vector of index $n-2$ would need to have non-vanishing coefficients in front of both eigenvectors in the eigenvector expansion. Thus, we find an index $r\in\{0,\dots,m-1\}$ with
\begin{equation*} \left(\begin{array}{c} P_r(a,\zeta_m) \\ Q_r(a,\zeta_m) \end{array}\right) = a_{r+}v_+ + a_{r-}v_-, \text{~with~} a_{r\pm} \neq 0. \end{equation*}
Then \eqref{eqn:gen-rrf-div} can be similarly established for this $r$, proving the divergence.
\end{proof}
\end{Thm}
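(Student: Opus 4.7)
The plan is to reuse the telescoping estimate \eqref{eqn:gen-rrf-div} from the proof of Theorem 3, after removing the only place its hypothesis was used: the existence of an index $r\in\{0,\dots,m-1\}$ for which the decomposition $(P_r,Q_r)^T=a_{r+}v_++a_{r-}v_-$ has both $a_{r+}\ne 0$ and $a_{r-}\ne 0$. Under the hypothesis $\tfrac14+a^m\in\RN_{<0}$ we still have $\lambda_\pm=\tfrac12\pm i\sqrt{-\tfrac14-a^m}$ distinct, complex conjugate, and of common modulus $\sqrt{-a^m}>0$, so $\lambda_-/\lambda_+=e^{i\varphi}$ with $\varphi\in(-\pi,\pi)\setminus\{0\}$. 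Distinctness of eigenvalues makes $A_m$ diagonalizable over $\CN$, yielding linearly independent eigenvectors $v_\pm$.

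For $m\ge 3$, the heart of the proof is the production of this index $r$. I would argue by contradiction: suppose every vector $(P_r(a,\zeta_m),Q_r(a,\zeta_m))^T$, $0\le r\le m-1$, is a scalar multiple of $v_+$ or of $v_-$. The determinant identity \eqref{eqn:det-form} forces consecutive such vectors to be linearly independent, hence to be pure eigenvectors of \emph{different} eigenvalues. After relabelling the eigenvectors we may assume $(P_0,Q_0)^T\in\CN v_+$ and $(P_1,Q_1)^T\in\CN v_-$. Applying the elementary three-term recursion
\begin{equation*}
\left(\begin{array}{c}P_2(a,\zeta_m)\\ Q_2(a,\zeta_m)\end{array}\right)=\left(\begin{array}{c}P_1(a,\zeta_m)\\ Q_1(a,\zeta_m)\end{array}\right)+a\zeta_m^2\left(\begin{array}{c}P_0(a,\zeta_m)\\ Q_0(a,\zeta_m)\end{array}\right)
\end{equation*}
then expresses $(P_2,Q_2)^T$ as a combination with both $v_+$ and $v_-$ coefficients nonzero, contradicting the assumption that $(P_2,Q_2)^T$ is a pure eigenvector. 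Hence a suitable $r$ exists.

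For this $r$, the computation \eqref{eqn:gen-rrf-div} goes through almost verbatim. The numerator of the difference $|P_{(q+1)m+r}/Q_{(q+1)m+r}-P_{qm+r}/Q_{qm+r}|$ reduces to the positive constant $|a_{r-}/a_{r+}|\cdot|1-e^{i\varphi}|\cdot|\det(v_+|v_-)|$, while each factor $v_{2+}+(a_{r-}/a_{r+})e^{i\varphi q}v_{2-}$ of the denominator is bounded above by $|v_{2+}|+|a_{r-}/a_{r+}|\,|v_{2-}|$, a constant in $q$. A mild bookkeeping issue arises if this denominator can come arbitrarily close to $0$, which can only happen when $|v_{2+}|=|a_{r-}/a_{r+}|\,|v_{2-}|$; but then the difference is unbounded along a subsequence rather than vanishing, again precluding convergence. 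Finally, if a denominator vanishes outright for some $q$, then $Q_{qm+r}(a,\zeta_m)=0$, so the corresponding approximant is $\infty$ and the continued fraction diverges directly.

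The edge cases $m\in\{1,2\}$, where Lemma \ref{lem:matr-rec} does not apply, are handled by the same eigenvector/recursion analysis on the two-step and four-step recursions mentioned at the end of the proof of Theorem \ref{thm:gen-rrf-convergence}. The main obstacle is the combinatorial argument producing $r$ in the second paragraph; everything else is a direct reprise of the estimate from Theorem \ref{thm:gen-rrf-convergence}, requiring only minor care to rule out the degenerate denominator possibility.
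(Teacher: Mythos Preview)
Your proposal is correct and follows essentially the same route as the paper: both arguments locate an index $r$ with $a_{r\pm}\neq 0$ by combining the linear independence of consecutive vectors (from \eqref{eqn:det-form}) with the three-term recursion, then invoke the lower bound \eqref{eqn:gen-rrf-div}. Your contradiction framing (assume all $(P_r,Q_r)^T$ are pure eigenvectors, then derive that $(P_2,Q_2)^T$ is not) is just the contrapositive of the paper's case analysis (start from a pure eigenvector at index $n$ and look at $n-1$, $n-2$); the extra remarks you make about vanishing or nearly vanishing denominators and the $m\in\{1,2\}$ cases are reasonable caveats that the paper leaves implicit.
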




\begin{Cor} \label{cor:schur-results}
Let $5\nmid m$ and $\zeta_m\in\CN$ be a primitive $m$-th root of unity. Then the Rogers-Ramanujan continued fraction is convergent at $x=\zeta_m$.
\begin{proof}
We have
\begin{equation*} K_1(x) = \frac 1 {1 + x^{-\frac 1 5}R(x)}. \end{equation*}
Using the well-known fact that
\begin{equation*} \sqrt{\frac 1 4 + 1^m} = \frac 1 2 \sqrt 5 \in \QN(\zeta_m) \Leftrightarrow 5\mid m, \end{equation*}
the result immediately follows from Theorem \ref{thm:gen-rrf-convergence}. 
\end{proof}
\end{Cor}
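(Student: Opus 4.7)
The plan is to deduce this from Theorem \ref{thm:gen-rrf-convergence} with $a=1$, after verifying its two hypotheses and translating the resulting convergence of $K_1(\zeta_m)$ into convergence of the Rogers-Ramanujan continued fraction. The condition $\frac 14+1^m=\frac 54\notin\RN_{\le 0}$ is immediate. The substantive hypothesis is $\sqrt{\frac 14+1^m}=\frac{\sqrt 5}2\notin\QN(1,\zeta_m)=\QN(\zeta_m)$, equivalently $\sqrt 5\notin\QN(\zeta_m)$.

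For this I would invoke the classical cyclotomic criterion $\sqrt 5\in\QN(\zeta_m)\Longleftrightarrow 5\mid m$. The forward implication follows from the quadratic Gauss sum $\sum_{k=0}^4 \zeta_5^{k^2}=\sqrt 5$, placing $\QN(\sqrt 5)\subset\QN(\zeta_5)\subset\QN(\zeta_m)$ whenever $5\mid m$. The converse is a short ramification argument: $\QN(\sqrt 5)/\QN$ is ramified only at $5$, while $\QN(\zeta_m)/\QN$ is ramified only at primes dividing $m$, so any inclusion $\QN(\sqrt 5)\subset\QN(\zeta_m)$ forces $5\mid m$. Under the hypothesis $5\nmid m$ we therefore obtain $\sqrt 5\notin\QN(\zeta_m)$, and both conditions of Theorem \ref{thm:gen-rrf-convergence} are fulfilled, yielding convergence of $K_1(\zeta_m)$.

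It remains to transfer this to convergence of the Rogers-Ramanujan continued fraction itself. From the definitions one has $R(x)=x^{1/5}R_1(x)$, so the $n$-th approximants of $R$ and $R_1$ differ by the nonzero scalar $\zeta_m^{1/5}$, and $K_1(x)=\frac{1}{1+R_1(x)}$ shows that the approximants of $K_1$ are obtained from those of $R_1$ by a fixed Möbius transformation. Convergence therefore passes from $K_1(\zeta_m)$ to $R_1(\zeta_m)$, to $R(\zeta_m)$, and hence to $K(\zeta_m)=\zeta_m^{1/5}/R(\zeta_m)$. The only nontrivial step is the algebraic fact $\sqrt 5\notin\QN(\zeta_m)$ for $5\nmid m$, which is classical; everything else is a direct citation of Theorem \ref{thm:gen-rrf-convergence} together with routine manipulation of the defining continued fractions.
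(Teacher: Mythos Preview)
Your proof is correct and follows essentially the same route as the paper's: apply Theorem \ref{thm:gen-rrf-convergence} with $a=1$, reducing the matter to the classical fact that $\sqrt 5\in\QN(\zeta_m)$ if and only if $5\mid m$. The paper simply cites this as ``well-known'' and relies on the equivalence of convergence of $K$, $R$, $K_a$, $R_a$ already noted in the introduction, whereas you sketch the Gauss-sum and ramification argument for the cyclotomic criterion and spell out the M\"obius-transformation transfer explicitly; neither addition changes the substance of the argument.
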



\noindent{\bf Remark.} \label{rmk:algebraic-condition}
The containment condition on $\sqrt{\frac 1 4 + a^m}$ is natural in certain cases. For instance, if $a=1$, it identifies exactly the roots of unity where the continued fraction is not convergent, as it coincides with an earlier Theorem by Schur, see \cite[p. 135]{Schur}. \\
However, there are also cases where it is fulfilled and the fraction is still convergent. For example, let 
\begin{equation*} b\in\QN\cap\left(\frac 1 2, \sqrt{\frac 1 4 + \frac 1 {4^m}}\right). \end{equation*}
Then there is a unique real algebraic number $a\in\left( 0,\frac 1 4\right) $ with $\sqrt{\frac 1 4 + a^m}=b\in\QN$. But by \cite[Theorem 4.4]{Huang} the fraction is still convergent. The exact places where this condition is necessary remain a mystery.


\subsection{Open Questions}

The first open question, mentioned above, is to simplify or replace the algebraic condition $\sqrt{\frac 1 4 + a^m}\notin\QN(a,\zeta_m)$. It does not seem entirely natural and could be too strong. Nevertheless, we further examined the case when $a$ is a root of unity, as this is probably the most interesting (or at least, the easiest) one. Although we could not come up with a proof, we can conjecture the exact behavior in this case. 
 
\begin{Con} \label{con:field-extension}
Let $\zeta_k, \zeta_m$ be primitive roots of unity. Then
\begin{equation*} \sqrt{1+4\zeta_k} \in \QN\left(\zeta_m\right) \Leftrightarrow k=1\text{~~~and~~~}5\mid m \text{~~~or~~~} k=2 \text{~~~and~~~} 3\mid m. \end{equation*}
\end{Con}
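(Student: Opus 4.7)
Plan. The forward direction is classical: $\sqrt{5} = \sum_{a=1}^{4}\bigl(\tfrac{a}{5}\bigr)\zeta_5^a \in \QN(\zeta_5)$ (quadratic Gauss sum) and $\sqrt{-3} = 1 + 2\zeta_3 \in \QN(\zeta_3)$, giving the containments $\sqrt{1+4\zeta_1}\in\QN(\zeta_m)$ for $5\mid m$ and $\sqrt{1+4\zeta_2}\in\QN(\zeta_m)$ for $3\mid m$. For the converse, suppose $\sqrt{1+4\zeta_k}\in\QN(\zeta_m)$; then $L := \QN(\zeta_k, \sqrt{1+4\zeta_k})\subseteq \QN(\zeta_m)$ is abelian, in particular Galois, over $\QN$. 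For $k\in\{1,2\}$, $L$ is the quadratic field $\QN(\sqrt 5)$ or $\QN(\sqrt{-3})$, whose containment in $\QN(\zeta_m)$ forces divisibility of $m$ by the conductor ($5$ or $3$).

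The real content is the case $k\geq 3$, where I would show $L/\QN$ is not even Galois (a fortiori not abelian). Let $\tau$ be complex conjugation, non-trivial on $\QN(\zeta_k)$ for $k\geq 3$. A lift of $\tau$ to $L$ exists iff $\tau(1+4\zeta_k)/(1+4\zeta_k) = (1+4\zeta_k^{-1})/(1+4\zeta_k)$ is a square in $\QN(\zeta_k)$; multiplying both numerator and denominator by $(1+4\zeta_k^{-1})$ shows this is equivalent to
\begin{equation*}
N_k := (1+4\zeta_k)(1+4\zeta_k^{-1}) = 17 + 4(\zeta_k + \zeta_k^{-1})
\end{equation*}
being a square in $\QN(\zeta_k)$, and since $N_k\in\RN_{>0}$, equivalently a square in the maximal real subfield $\QN(\zeta_k)^+$. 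Taking the norm down to $\QN$, if $N_k$ were such a square then
\begin{equation*}
\text{Norm}_{\QN(\zeta_k)^+/\QN}(N_k) = \text{Norm}_{\QN(\zeta_k)/\QN}(1+4\zeta_k) = 4^{\phi(k)}\Phi_k(-1/4) = \Phi_k(-4)
\end{equation*}
would be a perfect square in $\IN$, the last equality via the reciprocity $y^{\phi(k)}\Phi_k(1/y)=\Phi_k(y)$ for $k\geq 2$ evaluated at $y=-4$. Thus the conjecture reduces to showing $\Phi_k(-4)$ is never a perfect square for $k\geq 3$.

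For $k=p$ an odd prime, $\Phi_p(-4) = (4^p+1)/5$, and $\Phi_p(-4)=N^2$ becomes the Pell-type equation $(2^p)^2 - 5N^2 = -1$; its positive solutions $y_n$ satisfy $y_{n+1}=18y_n - y_{n-1}$ with $y_0=2$, $y_1=38$, and a congruence analysis modulo suitable auxiliary primes should rule out $y_n$ being a power of $2$ for $n\geq 1$. For composite $k$ the natural tool is the primitive prime divisor theorem of Zsygmondy/Birkhoff--Vandiver applied to the sequence $(-4)^n - 1$: for all but finitely many $k\geq 3$, there is a prime $q$ with $\operatorname{ord}_q(-4)=k$ dividing $\Phi_k(-4)$ to exactly the first power, precluding $\Phi_k(-4)$ from being a square. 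The main obstacle is handling all $k\geq 3$ uniformly---combining the Pell-based argument for primes with Zsygmondy for composites and treating the finite exceptional list individually (the small values $\Phi_3(-4)=13,\,\Phi_4(-4)=17,\,\Phi_5(-4)=205,\,\Phi_6(-4)=21,\,\Phi_7(-4)=3277,\,\Phi_8(-4)=257,\,\Phi_9(-4)=4033,\,\Phi_{10}(-4)=341$ are manifestly non-squares and support the conjecture). A secondary subtlety is that non-square-ness of the rational norm $\Phi_k(-4)$ is sufficient but not necessary for $N_k$ itself to be a non-square in $\QN(\zeta_k)^+$: should $\Phi_k(-4)$ happen to be a perfect square for some $k$, a finer analysis of $N_k$ via local obstructions at primes of $\QN(\zeta_k)^+$ would then be required.
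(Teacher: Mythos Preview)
The paper does not prove this statement; it is explicitly labeled a conjecture, and the authors only remark that the ``$\Leftarrow$'' direction is elementary while ``$\Rightarrow$'' remains open. So there is no proof in the paper to compare against, and your proposal should be read as an attack on an open problem.

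Your reduction is sound and worth recording. Assuming $\alpha=1+4\zeta_k$ is not a square in $\QN(\zeta_k)$ (which itself follows from $\Phi_k(-4)$ being a nonsquare, via the norm from $\QN(\zeta_k)$), the chain
\[
\sqrt{\alpha}\in\QN(\zeta_m)\ \Longrightarrow\ L/\QN\text{ Galois}\ \Longrightarrow\ \tau\alpha/\alpha\in\QN(\zeta_k)^{*2}\ \Longrightarrow\ N_k\in(\QN(\zeta_k)^+)^{*2}\ \Longrightarrow\ \Phi_k(-4)\in\IN^2
\]
is correct; the identity $\mathrm{Norm}_{\QN(\zeta_k)/\QN}(1+4\zeta_k)=\Phi_k(-4)$ via $y^{\phi(k)}\Phi_k(1/y)=\Phi_k(y)$ is fine for $k\ge 2$. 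So the whole conjecture does reduce to the arithmetic statement ``$\Phi_k(-4)$ is never a perfect square for $k\ge 3$.''

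Two remarks on the endgame. First, the odd-prime case is easier than you suggest: the positive $x$-solutions of $x^2-5y^2=-1$ satisfy $x_0=2$, $x_1=38$, $x_{n+1}=18x_n-x_{n-1}$, hence $x_n\equiv 2\pmod 4$ for all $n$; since $2^p\equiv 0\pmod 4$ for $p\ge 2$, no $2^p$ occurs, and $\Phi_p(-4)=(4^p+1)/5$ is never a square. In fact one can push further: for any squarefree $k\ge 2$, the coefficient of $x$ in $\Phi_k$ is $-\mu(k)$, so $\Phi_k(-4)\equiv 1+4\mu(k)\pmod{16}$, i.e.\ $\Phi_k(-4)\equiv 5$ or $13\pmod{16}$, neither a quadratic residue, settling all squarefree $k$ at once.

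Second, and this is the genuine gap, your Zsygmondy argument for composite $k$ does not go through as stated. Zsygmondy guarantees a primitive prime divisor $q$ of $(-4)^k-1$, hence $q\mid\Phi_k(-4)$; but it does \emph{not} guarantee $v_q(\Phi_k(-4))=1$. That would require $(-4)^k\not\equiv 1\pmod{q^2}$, a Wieferich-type condition that is not known in general and cannot simply be assumed. So the step ``dividing $\Phi_k(-4)$ to exactly the first power, precluding $\Phi_k(-4)$ from being a square'' is unjustified. The non-squarefree case (where the mod~$16$ trick collapses to $\Phi_k(-4)\equiv 1\pmod{16}$, since then $\mu(k)=0$) therefore remains the hard core of the problem, and your current plan does not supply a mechanism for it. Your ``secondary subtlety'' is correctly identified but would only become relevant if $\Phi_k(-4)$ were a square for some $k$, which the numerics do not suggest.
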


Note that only the ``$\Rightarrow$" direction is open. The reverse direction ``$\Leftarrow$" is explicitly solved (see the proof of Corollary \ref{cor:schur-results} above). 
The conjecture implies that, for $a=\zeta_k$ and $x=\zeta_m$ a root of unity, then
\begin{equation*} 
\sqrt{\frac 1 4+a^m} \in \QN\left(a,\zeta_m\right) = \QN\left(\zeta_{\lcm{k,m}}\right)
\Leftrightarrow a^m=1\text{~~~and~~~} 5\mid \lcm{k,m} \text{~~~or~~~} a^m=-1 \text{~~~and~~~} 3 \mid \lcm{k,m}. \end{equation*}
Here the second condition is not interesting, as this case is already covered by Theorem \ref{thm:gen-rrf-divergence}. The first condition can be resolved to $5\mid m$ and $k\mid m$. We want to state a specific conjecture for this case that we checked numerically using SAGE for $1\leq k\leq 50$ and $3\leq m\leq 100$.

\begin{Con}
Let $a=\zeta_k^j=e^{2\pi i \frac j k}$ and $x=\zeta_m^l=e^{2\pi i\frac l m}$ be roots of unity, $\gcd(j,k)=\gcd(l,m)=1$. Assume that $5\mid m$ and $k\mid m$. This allows us to define
\begin{equation*}
\pi_{\frac l m, \frac j k}: \begin{array}{ccc}
\IN/m\IN         & \rightarrow & \IN/k\IN \\
\left[l\right]_m & \mapsto     & \left[j\right]_k
\end{array} \text{~~~and~~~} \iota: \begin{array}{ccc}
\IN/k\IN         & \rightarrow & \IN/m\IN \\
\left[1\right]_k & \mapsto     & \left[\frac m k\right]_m
\end{array}.
\end{equation*}
Consider the set
\begin{equation*} R := \left(\iota\circ\pi_{\frac l m, \frac j k}\right)\left([-1]_m\right) \in \IN/m\IN. \end{equation*}
Then for any $r\in \{s-t: s\in R, t\in\{1,2\}\}\cap\NN$ we find
\begin{equation*} 
\left(\begin{array}{cc} ax^{m-1}P_{m-3}(a,x) & P_{m-2}(a,x) \\ ax^{m-1}Q_{m-3}(a,x) & Q_{m-2}(a,x) \end{array}\right) \left(\begin{array}{c} P_r(a,x) \\ Q_r(a,x) \end{array}\right) = \lambda\left(\begin{array}{c} P_r(a,x) \\ Q_r(a,x) \end{array}\right)\text{~~~~~for~}\lambda = \frac 1 2 \pm \sqrt{\frac 1 4 + a^m}.
\end{equation*}
Therefore we always find two consecutive indices such that the vectors are eigenvectors. This implies that $K_a(x)$ diverges.
\end{Con}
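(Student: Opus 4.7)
The plan is to reduce the conjecture to two cyclotomic identities for the classical $a=1$ approximants $P_n(1,\zeta_m),Q_n(1,\zeta_m)$ with $5\mid m$, and then to note that the divergence argument becomes a near-verbatim repeat of the proof of Theorem \ref{thm:gen-rrf-divergence}.

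First I would verify that $R$ is engineered precisely so that $ax^R=1$: unwinding the definitions of $\pi$ and $\iota$, $R\equiv-jl^{-1}\tfrac{m}{k}\pmod m$ (with $l^{-1}$ taken mod $k$), and writing $a=\zeta_m^{jm/k}$, $x=\zeta_m^l$ gives $ax^R=\zeta_m^{jm/k+lR}=1$. Given this, the shifted approximants $\tilde P_n:=P_{n+R}(a,x)$, $\tilde Q_n:=Q_{n+R}(a,x)$ satisfy the simpler recursion $\tilde R_n=\tilde R_{n-1}+x^n\tilde R_{n-2}$, which is precisely the $a=1$ recursion obeyed by $P_n(1,x)$ and $Q_n(1,x)$. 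Its two-dimensional solution space has basis $\alpha_n:=P_n(1,x)$, $\beta_n:=Q_n(1,x)-P_n(1,x)$, satisfying $(\alpha_{-1},\alpha_{-2})=(1,0)$ and $(\beta_{-1},\beta_{-2})=(0,1)$. Matching $(\tilde P_{-1},\tilde P_{-2})=(P_{R-1},P_{R-2})$ and $(\tilde Q_{-1},\tilde Q_{-2})=(Q_{R-1},Q_{R-2})$ then yields the vector-valued expansion
\[ \bigl(\tilde P_n,\tilde Q_n\bigr)=P_n(1,x)\bigl(P_{R-1},Q_{R-1}\bigr)+\bigl(Q_n(1,x)-P_n(1,x)\bigr)\bigl(P_{R-2},Q_{R-2}\bigr). \]

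By Lemma \ref{lem:matr-rec}, $(P_{R-1},Q_{R-1})$ is an eigenvector of $A_m$ if and only if $(\tilde P_{m-1},\tilde Q_{m-1})=(P_{m+R-1},Q_{m+R-1})$ is a scalar multiple of $(P_{R-1},Q_{R-1})$. Since $(P_{R-1},Q_{R-1})$ and $(P_{R-2},Q_{R-2})$ are linearly independent by \eqref{eqn:det-form}, the expansion above collapses this to the vanishing of the coefficient of $(P_{R-2},Q_{R-2})$, namely to the identity \textbf{(I)}: $P_{m-1}(1,\zeta_m)=Q_{m-1}(1,\zeta_m)$. The analogous computation at $r=R-2$ reduces the eigenvector condition there to \textbf{(II)}: $P_{m-2}(1,\zeta_m)=0$. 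Both are explicit $q$-binomial sum identities via Lemma \ref{lem:qp-eqns}. Identity (II) is closely related to the finite Rogers-Ramanujan vanishing underlying Schur's divergence proof for $K(x)$ at $\zeta_m$ with $5\mid m$, and (I) should follow by a parallel $q$-binomial manipulation using the $q$-Pascal relation \eqref{binom} together with the divisibility $5\mid m$, which governs the vanishing of the relevant $q$-binomials $\genbinom{m-k}{k}_{\zeta_m}$. Proving (I) and (II) cleanly is the main technical obstacle.

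Once (I) and (II) hold, the eigenvalues are forced automatically: $\operatorname{tr}(A_m)=1$ by Lemma \ref{thm:gen-rrf-tr}, $\det(A_m)=-a^m=-1$ by \eqref{eqn:det-form}, so $A_m$ has characteristic polynomial $\lambda^2-\lambda-1$ with roots $\tfrac{1}{2}\pm\tfrac{\sqrt 5}{2}$, and the two produced eigenvectors, being linearly independent, correspond to the two distinct eigenvalues. Divergence of $K_a(\zeta_m)$ then follows exactly as in the proof of Theorem \ref{thm:gen-rrf-divergence}: the subsequences $(P_{qm+R-1}/Q_{qm+R-1})_{q\ge 0}$ and $(P_{qm+R-2}/Q_{qm+R-2})_{q\ge 0}$ are each constant in $q$ and equal to the two distinct component ratios of the eigenvectors, so $(P_n/Q_n)_n$ cannot converge.
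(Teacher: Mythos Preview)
The statement you are attempting is a \emph{conjecture} in the paper, not a theorem: the authors offer no proof and merely report numerical verification in SAGE for $1\le k\le 50$, $3\le m\le 100$. So there is no argument of theirs to compare against; what follows is an assessment of your approach on its own merits.

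Your reduction is correct and genuinely illuminating. The observation that $R$ is chosen precisely so that $a\zeta_m^R=1$, and hence that the shifted sequences $\tilde P_n=P_{n+R}(a,\zeta_m)$, $\tilde Q_n=Q_{n+R}(a,\zeta_m)$ obey the \emph{classical} $a=1$ recursion, is the key structural insight the paper does not record. The expansion of $(\tilde P_n,\tilde Q_n)$ in the basis $(P_{R-1},Q_{R-1}),(P_{R-2},Q_{R-2})$ with scalar coefficients $P_n(1,\zeta_m)$ and $Q_n(1,\zeta_m)-P_n(1,\zeta_m)$ is valid (the initial values match, as you check), and together with Lemma~\ref{lem:matr-rec} and the linear independence from \eqref{eqn:det-form} it does reduce the eigenvector claim at $r=R-1,R-2$ exactly to your identities (I) $P_{m-1}(1,\zeta_m)=Q_{m-1}(1,\zeta_m)$ and (II) $P_{m-2}(1,\zeta_m)=0$. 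In other words, you have shown that the full conjecture for arbitrary $a=\zeta_k^j$ with $k\mid m$ is \emph{equivalent} to its $a=1$ instance, which is Schur's classical $5\mid m$ setting. The divergence deduction from the eigenvector property is then exactly as in the paragraph following the conjecture.

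The gap, which you identify honestly, is that (I) and (II) are asserted but not proved. Both are concrete cyclotomic identities; (II) checks out for $m=5$ (indeed $P_3(1,\zeta_5)=1+\zeta_5+\zeta_5^2+\zeta_5^3+\zeta_5^4=0$), and (I) does as well. Since Schur in \cite{Schur} computes the approximants $P_n,Q_n$ at primitive $m$-th roots with $5\mid m$ explicitly enough to establish divergence of $K$, his formulas should yield (I) and (II); but you would need to extract this carefully, and the $q$-binomial manipulation you gesture at via \eqref{binom} is not yet a proof. If you can close that step, your argument settles the conjecture.
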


What are the consequences of this conjecture? By the determinant formula \eqref{eqn:det-form} two consecutive vectors cannot be linearly dependent. Hence, the vectors must be eigenvectors of different eigenvalues. By the assumption that $5\mid m$ and $k\mid m$, the eigenvalues are real-valued and have different absolute values. Therefore, the consecutive vectors will produce two different limit points in the sequence $\{P_n/Q_n\}_n$, to be specific, the ratios of the first and second entry of the linearly independent eigenvectors. Therefore, the continued fraction is divergent. In conclusion, these two conjectures imply the following general picture, which predicts in an explicit manner the set of limit points of $K_a(x)$, for any $a$ and $x$ being roots of unity.

\begin{Con}
Let $a=\zeta_k, x=\zeta_m$ be primitive roots of unity, $m\geq 3$. Then
\begin{align*} K_a(x) \text{~converges} &\Leftrightarrow \sqrt{\frac 1 4 + a^m}\notin\QN(a,x) \text{~and~} \frac 1 4 + a^m \notin\RN_{\leq 0} \\ &\Leftrightarrow 5\nmid m \text{~or~} k\nmid m \text{~and~} a^m\neq -1. \end{align*}
In case of convergence, the limit is given by \eqref{eqn:gen-rrf-limit}. If $5\mid m$ and $k\mid m$, the two limit points of $\left\lbrace P_n/Q_n\right\rbrace _n$ are exactly
\begin{equation*} \frac {P_{m-2}(a,x)} {\frac 1 2 \pm \sqrt{\frac 1 4 +a^m} - ax^{m-1}P_{m-3}(a,x)} \in \widehat{\CN}.\end{equation*}
For $3\mid m$ and $a^m=-1$ these limit points can also occur. If $\{P_{qm+r}/Q_{qm+r}\}_q$ does not converge to one of these limit points, it produces exactly these three more limit points
\begin{equation*} \frac {a_{r+} v_{1+} + a_{r-} v_{1-} \zeta_3^s}{a_{r+} v_{2+} + a_{r-} v_{2-} \zeta_3^s} \text{~for~} s\in\{0,1,2\}, \end{equation*}
where
\begin{equation*} v_\pm = \left(\begin{array}{c} v_{1\pm} \\ v_{2\pm} \end{array}\right) = \left(\begin{array}{c} P_{m-2}(a,x) \\ \frac 1 2 \pm \sqrt{\frac 1 4+a^m} - ax^{m-1}P_{m-3}(a,x) \end{array}\right) \text{~and~} \left(\begin{array}{c} P_r(a,x) \\ Q_r(a,x) \end{array}\right) = a_{r+} v_+ + a_{r-} v_-. \end{equation*}
\end{Con}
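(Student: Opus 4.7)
The plan is to assemble the three assertions of the conjecture from previously established results together with the two preceding conjectures. The algebraic content comes from Conjecture~\ref{con:field-extension}, the easy directions of the convergence dichotomy from Theorems~\ref{thm:gen-rrf-convergence} and~\ref{thm:gen-rrf-divergence}, and the enumeration of limit points from the eigenvector machinery of Lemma~\ref{lem:matr-rec} together with the preceding conjecture about exceptional eigenvector indices.

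First I would reduce the second biconditional to Conjecture~\ref{con:field-extension}. Since $a=\zeta_k$, $a^m$ lies on the unit circle, so $\frac 1 4 + a^m \in \RN_{\leq 0}$ is equivalent to $a^m=-1$. Writing $\QN(a,x) = \QN(\zeta_{\lcm{k,m}})$ and applying Conjecture~\ref{con:field-extension} to the element $a^m$ (which is a root of unity of order $k/\gcd(k,m)$), we obtain that $\sqrt{\frac 1 4+a^m}\in\QN(a,x)$ holds precisely when $a^m=1$ and $5\mid\lcm{k,m}$, or $a^m=-1$ and $3\mid\lcm{k,m}$. Imposing also $a^m\neq -1$ (forced by the first condition in the biconditional) kills the second alternative, and using $a^m=1\Leftrightarrow k\mid m$ we obtain the stated explicit form. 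The $\Leftarrow$ direction of the first biconditional is now exactly Theorem~\ref{thm:gen-rrf-convergence}, and the formula for the limit is a restatement of \eqref{eqn:gen-rrf-limit}.

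For the $\Rightarrow$ direction, together with the limit-point enumeration in the regime $5\mid m$, $k\mid m$, we have $a^m=1$, so $\lambda_\pm = \frac{1\pm\sqrt 5}{2}\in\RN$ with $\abs{\lambda_+}>\abs{\lambda_-}$. By the preceding conjecture there exist two consecutive indices $r$ for which $(P_r(a,x),Q_r(a,x))$ is an eigenvector of $A_m$; by \eqref{eqn:det-form} these two vectors cannot share an eigenspace, so they realize the two different eigenvalues. For such $r$ the ratio $P_{qm+r}/Q_{qm+r}$ is constant in $q$ and equal to $v_{1\pm}/v_{2\pm}$; for every other $r$ both expansion coefficients $a_{r\pm}$ are nonzero, and since $\abs{\lambda_-/\lambda_+}<1$, the subsequence converges to $v_{1+}/v_{2+}$. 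The full set of accumulation points is therefore exactly the two displayed values, which in particular proves divergence in this regime and closes the first biconditional.

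For $3\mid m$ and $a^m=-1$, divergence is already contained in Theorem~\ref{thm:gen-rrf-divergence}, so only the limit-point enumeration remains. Now $\lambda_\pm = e^{\pm i\pi/3}$ both have modulus one with ratio $\lambda_-/\lambda_+ = \zeta_3^{-1}$. Indices $r$ for which $(P_r,Q_r)$ is an eigenvector of $A_m$ produce a constant subsequence yielding one of the two values from \eqref{eqn:gen-rrf-limit}, while for every other $r$ the sequence $P_{qm+r}/Q_{qm+r}$ cycles through the three values in the final display as $q\bmod 3$ varies. The main obstacle is that this entire argument rests on Conjecture~\ref{con:field-extension} and on the preceding conjecture. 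The former is a purely cyclotomic question about $[\QN(\zeta_m,\sqrt{1+4\zeta_k}):\QN(\zeta_m)]$, in principle accessible via Kummer theory; the latter requires a combinatorial analysis of precisely when the approximant vectors $(P_r(a,x),Q_r(a,x))$ solve the eigenvalue equation associated to $A_m$, which is the truly delicate step and the reason both statements remain open.
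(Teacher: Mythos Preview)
Your proposal matches the paper's own treatment: the statement is a \emph{conjecture}, not a theorem, and the paper gives no proof but only the heuristic derivation you have reproduced --- namely, that the second biconditional reduces to Conjecture~\ref{con:field-extension}, the convergence direction follows from Theorem~\ref{thm:gen-rrf-convergence}, the divergence in the $5\mid m,\ k\mid m$ regime (together with the two-point enumeration) follows from the preceding eigenvector conjecture via \eqref{eqn:det-form}, and the $a^m=-1$ case unwinds via $\lambda_-/\lambda_+=\zeta_3^{-1}$. Your final paragraph is exactly right in identifying that the argument is conditional on both earlier conjectures, which is precisely the status the paper assigns to this statement.
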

\textbf{Acknowledgements.} This paper was written during the Cologne Young Researchers in Number Theory Program 2015, organized by Larry Rolen. The authors would like to thank him for his tireless support and numerous ideas regarding the project and the University of Cologne for hospitality. The program was funded by the University of Cologne postdoc grant DFG Grant D-72133-G-403-151001011, under the Institutional Strategy of the University of Cologne within the German Excellence Initiative.\\
\indent The authors would also like to thank Kathrin Bringmann, Michael Griffin and Armin Straub for the fruitful discussions and helpful suggestions. In particular, Armin Straub suggested that the polynomials $T_{m,k}$ in Lemma \ref{thm:gen-rrf-tr} might factor into cyclotomic polynomials and Michael Griffin proposed the precise formula for $T_{m,k}$. The authors had further inspiring discussions with Karl Mahlburg, whom they would like to thank for his comments. Last, but not least, the authors thank the anonymous referees for the useful observations and suggested corrections.

\end{document}